\newtheorem{theorem}{Theorem}[section]
\newtheorem{prop}[theorem]{Proposition}
\newtheorem{defn}[theorem]{Definition}
\newtheorem{lemma}[theorem]{Lemma}
\newtheorem{coro}[theorem]{Corollary}
\newtheorem{prop-def}{Proposition-Definition}[section]
\newtheorem{exam}[theorem]{Example}
\newtheorem{assumption}[theorem]{Assumption}
\newcommand{\nc}{\newcommand}
\nc{\delete}[1]{{}}
\nc{\mmargin}[1]{}
\nc{\mlabel}[1]{\label{#1}}  
\nc{\mcite}[1]{\cite{#1}}  
\nc{\mref}[1]{\ref{#1}}  
\nc{\mbibitem}[1]{\bibitem{#1}} 
\nc{\mlabel}[1]{\label{#1}  
{\hfill \hspace{1cm}{\bf{{\ }\hfill(#1)}}}}
\nc{\mcite}[1]{\cite{#1}{{\bf{{\ }(#1)}}}}  
\nc{\mref}[1]{\ref{#1}{{\bf{{\ }(#1)}}}}  
\nc{\mbibitem}[1]{\bibitem[\bf #1]{#1}} 
\nc{\difforg}{U_D}
\nc{\rbforg}{U_{RB}}
\nc{\multforg}{U_{M}}
\nc{\diffree}{{}_DF}
\nc{\difcof}{F_D}
\nc{\rbfree}{{}_{RB}F}
\nc{\rbcof}{F_{RB}}
\nc{\multfree}{{}_{M}F}
\nc{\multcof}{F_M}
\nc{\vep}{\varepsilon}
\nc{\bin}[2]{ (_{\stackrel{\scs{#1}}{\scs{#2}}})}  
\nc{\binc}[2]{(\!\! \begin{array}{c} \scs{#1}\\
    \scs{#2} \end{array}\!\!)}  
\nc{\bincc}[2]{  ( {\scs{#1} \atop
    \vspace{-1cm}\scs{#2}} )}  
\nc{\bs}{\bar{S}}
\nc{\la}{\longrightarrow}
\nc{\ot}{\otimes}
\nc{\rar}{\rightarrow}
\nc{\dar}{\downarrow}
\nc{\dap}[1]{\downarrow \rlap{$\scriptstyle{#1}$}}
\nc{\defeq}{\stackrel{\rm def}{=}}
\nc{\dis}[1]{\displaystyle{#1}}
\nc{\dotcup}{\ \displaystyle{\bigcup^\bullet}\ }
\nc{\hcm}{\ \hat{,}\ }
\nc{\hts}{\hat{\otimes}}
\nc{\hcirc}{\hat{\circ}}
\nc{\lleft}{[}
\nc{\lright}{]}
\nc{\curlyl}{\left \{ \begin{array}{c} {} \\ {} \end{array}
    \right .  \!\!\!\!\!\!\!}
\nc{\curlyr}{ \!\!\!\!\!\!\!
    \left . \begin{array}{c} {} \\ {} \end{array}
    \right \} }
\nc{\longmid}{\left | \begin{array}{c} {} \\ {} \end{array}
    \right . \!\!\!\!\!\!\!}
\nc{\ora}[1]{\stackrel{#1}{\rar}}
\nc{\ola}[1]{\stackrel{#1}{\la}}
\nc{\scs}[1]{\scriptstyle{#1}} \nc{\mrm}[1]{{\rm #1}}
\nc{\dirlim}{\displaystyle{\lim_{\longrightarrow}}\,}
\nc{\invlim}{\displaystyle{\lim_{\longleftarrow}}\,}
\nc{\dislim}[1]{\displaystyle{\lim_{#1}}} \nc{\colim}{\mrm{colim}}
\nc{\mvp}{\vspace{0.3cm}} \nc{\tk}{^{(k)}} \nc{\tp}{^\prime}
\nc{\ttp}{^{\prime\prime}} \nc{\svp}{\vspace{2cm}}
\nc{\vp}{\vspace{8cm}}
\nc{\modg}[1]{\!<\!\!{#1}\!\!>}
\nc{\intg}[1]{F_C(#1)}
\nc{\lmodg}{\!<\!\!}
\nc{\rmodg}{\!\!>\!}
\nc{\cpi}{\widehat{\Pi}}
\nc{\sha}{{\mbox{\cyr X}}}  
\nc{\ssha}{{\mbox{\cyrs X}}} 
\nc{\tsha}{{\mbox{\cyrt X}}}
\nc{\shpr}{\diamond}    
\nc{\labs}{\mid\!}
\nc{\rabs}{\!\mid}
\font\cyr=wncyr10
\font\cyrs=wncyr7
\font\cyrt=wncyr5
\nc{\ann}{\mrm{ann}}
\nc{\Aut}{\mrm{Aut}}
\nc{\can}{\mrm{can}}
\nc{\Cont}{\mrm{Cont}}
\nc{\rchar}{\mrm{char}}
\nc{\cok}{\mrm{coker}}
\nc{\dtf}{{R-{\rm tf}}}
\nc{\dtor}{{R-{\rm tor}}}
\nc{\Div}{{\mrm Div}}
\nc{\End}{\mrm{End}}
\nc{\Ext}{\mrm{Ext}}
\nc{\Fil}{\mrm{Fil}}
\nc{\Fr}{\mrm{Fr}}
\nc{\Frob}{\mrm{Frob}}
\nc{\Gal}{\mrm{Gal}}
\nc{\GL}{\mrm{GL}}
\nc{\Hom}{\mrm{Hom}}
\nc{\hsr}{\mrm{H}}
\nc{\hpol}{\mrm{HP}}
\nc{\id}{\mrm{id}}
\nc{\im}{\mrm{im}}
\nc{\incl}{\mrm{incl}}
\nc{\length}{\mrm{length}}
\nc{\mchar}{\rm char}
\nc{\mpart}{\mrm{part}}
\nc{\ql}{{\QQ_\ell}}
\nc{\qp}{{\QQ_p}}
\nc{\rank}{\mrm{rank}}
\nc{\rcot}{\mrm{cot}}
\nc{\rdef}{\mrm{def}}
\nc{\rdiv}{{\rm div}}
\nc{\rtf}{{\rm tf}}
\nc{\rtor}{{\rm tor}}
\nc{\res}{\mrm{res}}
\nc{\SL}{\mrm{SL}}
\nc{\Spec}{\mrm{Spec}}
\nc{\tor}{\mrm{tor}}
\nc{\Tr}{\mrm{Tr}}
\nc{\tr}{\mrm{tr}}
\nc{\ab}{\mathbf{Ab}}
\nc{\DRB}{\mathbf{DRB}}
\nc{\OPA}{\mathbf{OPA}}
\nc{\Alg}{{\mathbf{Alg}}}
\nc{\ALG}{{\mathbf{ALG}}}
\nc{\RB}{\mathbf{RB}}
\nc{\RBA}{\mathbf{RBA}}
\nc{\bfk}{{\bf k}}
\nc{\bfone}{{\bf 1}}
\nc{\bfzero}{{\bf 0}}
\nc{\detail}{\marginpar{\bf More detail}
    \noindent{\bf Need more detail!}
    \svp}
\nc{\Diff}{\mathbf{Diff}}
\nc{\gap}{\marginpar{\bf Incomplete}\noindent{\bf Incomplete!!}
    \svp}
\nc{\FMod}{\mathbf{FMod}}
\nc{\Int}{\mathbf{Int}}
\nc{\Mon}{\mathbf{Mon}}
\nc{\Mult}{{\mathbf{Mlt}}}
\nc{\remarks}{\noindent{\bf Remarks: }}
\nc{\Rep}{\mathbf{Rep}}
\nc{\Rings}{\mathbf{Rings}}
\nc{\Sets}{\mathbf{Sets}}
\nc{\BA}{{\mathbb A}}
\nc{\CC}{{\mathbb C}}
\nc{\DD}{{\mathbb D}}
\nc{\EE}{{\mathbb E}}
\nc{\FF}{{\mathbb F}}
\nc{\GG}{{\mathbb G}}
\nc{\HH}{{\mathbb H}}
\nc{\LL}{{\mathbb L}}
\nc{\NN}{{\mathbb N}}
\nc{\PP}{{\mathbb P}}
\nc{\QQ}{{\mathbb Q}}
\nc{\RR}{{\mathbb R}}
\nc{\TT}{{\mathbb T}}
\nc{\VV}{{\mathbb V}}
\nc{\ZZ}{{\mathbb Z}}
\nc{\TP}{\widetilde{P}}
\nc{\lp}{\widehat{P}}
\nc{\lpt}{\widehat{P}^{\, \omega}}
\nc{\lqb}{\widehat{Q}}
\nc{\lqt}{\widehat{Q}^{\, \omega}}
\nc{\ld}{\hat{d}}
\nc{\ldt}{\hat{d}^{\, \omega}}
\nc{\lqst}{\hat{q}^{\, \omega}}
\nc{\lqs}{\hat{q}}
\nc{\lpp}{\overline{P'}}
\nc{\dee}{\mathrm{Deg}}
\nc{\Mpf}{\mathbf{M}}
\nc{\m}{\iota}
\nc{\cala}{{\mathcal A}}
\nc{\calc}{{\mathcal C}}
\nc{\cald}{\mathcal{D}}
\nc{\cale}{{\mathcal E}}
\nc{\calf}{{\mathcal F}}
\nc{\calg}{{\mathcal G}}
\nc{\calh}{{\mathcal H}}
\nc{\cali}{{\mathcal I}}
\nc{\call}{{\mathcal L}}
\nc{\calm}{{\mathcal M}}
\nc{\caln}{{\mathcal N}}
\nc{\calo}{{\mathcal O}}
\nc{\calp}{{\mathcal P}}
\nc{\calr}{{\mathcal R}}
\nc{\cals}{{\mathcal S}}
\nc{\calt}{{\Omega}}
\nc{\calw}{{\mathcal W}}
\nc{\calx}{{\mathcal X}}
\nc{\CA}{\mathcal{A}}
\nc{\fraka}{{\mathfrak a}}
\nc{\frakb}{\mathfrak{b}}
\nc{\frakB}{{\frak B}} \nc{\frakm}{{\frak
m}} \nc{\frakM}{{\frak M}}
\nc{\frakp}{{\frak p}}
\nc{\frakS}{{\frak S}}
\nc{\frakA}{{\frak A}} \nc{\frakx}{{\frakx}}
\nc{\Dif}{\mathbf{Dif}}
\nc{\DIF}{\mathbf{DIF}}
\nc{\ADR}{\mathbf{ADR}}
\nc{\OA}{\mathbf{OA}}
\nc{\ODA}{\mathbf{ODA}}
\nc{\ORB}{\mathbf{ORB}}
\nc{\DaRB}{\mathbf{DaRB}}
\nc{\G}{\mathbf{G}}
\nc{\C}{\mathbf{C}}
\nc{\A}{\mathbf{A}}
\nc{\B}{\mathbf{B}}
\nc{\T}{\mathbf{T}}
\begin{document}
\title[Extensions of operators, liftings of monads, and mixed distributive laws]{Extensions of operators, liftings of monads, and distributive laws}

\date{\today}

\author{Shilong Zhang}
\address{Department of Mathematics, Lanzhou University, Lanzhou, Gansu, 730000, China}
\email{2663067567@qq.com}

\author{Li Guo}
\address{
Department of Mathematics and Computer Science,
Rutgers University,
Newark, NJ 07102, USA}
\email{liguo@newark.rutgers.edu}

\author{William Keigher}
\address{
Department of Mathematics and Computer Science,
Rutgers University,
Newark, NJ 07102, USA}
\email{keigher@newark.rutgers.edu}

\begin{abstract}
In a previous study, the algebraic formulation of the First Fundamental Theorem of Calculus (FFTC) is shown to allow extensions of differential and Rota-Baxter operators on the one hand, and to give rise to categorical explanations using the ideas of liftings of monads and comonads, and mixed distributive laws on the other. Generalizing the FFTC, we consider in this paper a class of constraints between a differential operator and a Rota-Baxter operator. For a given constraint, we show that the existences of extensions of differential and Rota-Baxter operators, of liftings of monads and comonads, and of mixed distributive laws are equivalent.
\end{abstract}

\subjclass[2010]{18C15, 13N99, 16W99}

\keywords{
Rota-Baxter algebra, differential algebra, operated algebra, extension of operators, monad, comonad, distributive law}

\maketitle

\vspace{-1cm}

\tableofcontents

\vspace{-1cm}

\setcounter{section}{0}

\allowdisplaybreaks

\section{Introduction}

An {\bf operated algebra} is an associative algebra $R$ together with a linear operator on $R$. It was introduced in 1960 by Kurosh~\mcite{Ku}. A special case which had been studied much earlier is a {\bf differential algebra}~\cite{Ri}, where the linear operator $d$ satisfies the Leibniz rule
\begin{equation}
 d(xy)=d(x)y+xd(y)\ \text{ for all }\ x,\ y\in R.
\mlabel{eq:der10}
\end{equation}
More generally, for a given scalar $\lambda$, a {\bf differential operator of weight $\lambda$} satisfies
\begin{equation}
 d(xy)=d(x)y+xd(y)+\lambda d(x)d(y)\ \text{ for all }\ x,\ y\in R
\mlabel{eq:der1}
\end{equation}
and
$d(\bfone_{R})=0.$ Another example of an operated algebra is a {\bf Rota-Baxter algebra of weight $\lambda$ }~\mcite{Ba}, where the operator $P$ satisfies
\begin{equation}
 P(x)P(y)=P(P(x)y)+P(xP(y))+\lambda P(xy)\ \text{ for all }\ x,\ y\in R.
\mlabel{eq:bax1}
\end{equation}

Both differential algebra and Rota-Baxter algebra arose as the algebraic abstractions of differential calculus and integral calculus, respectively. Their extensive studies have established the subjects as important areas of mathematics with broad applications in mathematics and physics \mcite{Bai,Ca,CK,Gub,GK1,GZ,Kol,RR,Ro,Wu2}. Bringing together the notions of a differential algebra and a Rota-Baxter algebra results in the concept of a differential Rota-Baxter algebra, where the differential operator and Rota-Baxter operator are paired through an abstraction of the First Fundamental Theorem of Calculus (FFTC). See~\mcite{GGR,GRR,RR} for a variation, called an integro-differential algebra.

As it turned out, this coupling of algebraic operators with analytic origins has important categorical implications.
Indeed in~\mcite{ZGK}, we gave a mixed distributive law to differential Rota-Baxter algebras.
To be precise, for any algebra $R$, let $(R^\NN, \partial_R)$ be the cofree differential algebra on $R$, where $R^\NN$ denotes the Hurwitz series algebra. Let $(\sha(R), P_R)$ be the free Rota-Baxter algebra on $R$, where $\sha(R)$ is constructed by the mixable shuffle product. In~\mcite{ZGK}, a differential operator on $R$ is uniquely extended to one on $\sha(R)$, enriching $\sha(R)$ to a differential Rota-Baxter algebra giving the free differential Rota-Baxter algebra. Similarly, a Rota-Baxter operator on $R$ is uniquely extended to one on $R^\NN$, again enriching $R^\NN$ to a differential Rota-Baxter algebra, yielding the cofree differential Rota-Baxter algebra. These extensions of operators further give the liftings of (co)monads\footnote{We use the convention `(co)word' to indicate the use of the notion of `word' and its dual `coword'.  This could apply to monad and extension, etc.}, which in turn give a mixed distributive law.
These results suggest close connections between extensions of differential and Rota-Baxter operators, liftings of (co)monads, and mixed distributive laws.

In order to better understand the interrelationships among these properties, we should work in a broader context in which such properties can be distinguished. This is the motivation of this follow-up study. The identity in the FFTC is viewed as an instance of a polynomial identity in two noncommutative variables symbolizing the differential operator and Rota-Baxter operator, regarded as a more general constraint between the two operators exemplified by the FFTC. We explore categorical consequences of these constraints, including extensions of operators to free Rota-Baxter algebras and cofree differential algebras, liftings of (co)monads on a richer category, and existence of mixed distributive laws.

To get some sense on how things should work in general, we consider a class of constraints which is special enough to be manageable yet broad enough to include the commonly known instances and to reveal the dependence of these categorical properties on the constraints. Thus we introduce in Section~\mref{sec:ext} a class $\calt$ of polynomials in two noncommutative variables $x$ and $y$.
Each element $\omega:=\omega(x, y)$ in $\calt$ is regarded as a coupling of a differential operator $d$ and a linear operator $Q$ given by a formal identity $\omega(d, Q)=0$. Then the triple $(R, d, Q)$ will be called a type $\omega$ operated differential algebra. Similarly, a type $\omega$ operated Rota-Baxter algebra $(R, q, P)$ consists of a linear operator $q$ satisfying $q(\bfone_R)=0$ and a Rota-Baxter operator $P$ with the identity $\omega(q, P)=0$. As a special case, a type $\omega$ differential Rota-Baxter algebra $(R, d, P)$ satisfies the identity $\omega(d, P)=0$ between the differential operator $d$ and Rota-Baxter operator $P$.
The FFTC in a differential Rota-Baxter algebra corresponds to the case of $\omega(x, y)=xy-1$.

Let $\OA$ be the category of operated algebras, and $\OA_0$ be the subcategory of $\OA$ which consists of all operated algebras $(R, q)$ with the property $q(\bfone_R)=0$. Let $\DIF$ and $\RBA$ be the categories of differential algebras and Rota-Baxter algebras, respectively. Then $\DIF$ and $\RBA$ are subcategories of $\OA_0$ and $\OA$, respectively.
Furthermore, let $\ODA_\omega, \ORB_\omega$ and $\DRB_\omega$ denote the categories of type $\omega$ operated differential algebras, type $\omega$ operated Rota-Baxter algebras and type $\omega$ differential Rota-Baxter algebras, respectively.
We provide a canonical way to extend a linear operator $Q$ on an algebra $R$ to one on the differential algebra $(R^\NN, \partial_R)$, giving rise to a functor
$G_\omega: \OA \to \ODA_\omega$. We likewise provide a canonical way to extend a linear operator $q$ on an algebra $R$ with $q(\bfone_R)=0$ to one on the Rota-Baxter algebra $(\sha(R), P_R)$, giving rise to a functor
$F_\omega: \OA_0 \to \ORB_\omega$. It is natural to ask whether the restriction of $G_\omega$ to the subcategory $\RBA$ of $\OA$ gives a functor $\RBA \to \DRB_\omega$. Likewise for $F_\omega$, as indicated in the following diagram.

\begin{equation*}
\xymatrix{
&\ODA_\omega&\DRB_\omega\ar@{_{(}->}[l]\ar@{^{(}->}[r]&\ORB_\omega&\\
&&&&\\
\OA\ar^{G_\omega}[ruu]&\, \RBA \ar@{_{(}->}[l]\ar^{G_\omega}[ruu]&&\DIF \ar@{^{(}->}[r]\ar_{F_\omega}[luu] &\OA_0\ar_{F_\omega}[luu]
}
\mlabel{eq:cat}
\end{equation*}
We show in Theorem~\mref{thm:main} that this natural expectation on restrictions of functors has equivalent statements in terms of liftings of (co)monads and corresponding mixed distributive laws.

\smallskip

Throughout the paper, we fix a commutative ring $\bfk$ with identity and an element $\lambda \in \bfk$. Unless otherwise noted, all algebras we consider will be commutative
$\bfk$-algebras with identity, and all operators are also $\bfk$-linear. All homomorphisms of algebras will be $\bfk$-algebra homomorphisms that preserve the identity, and all homomorphisms of operated algebras will be homomorphisms of algebras which commute with operators. Thus references to $\bfk$ will be suppressed unless a specific $\bfk$ is emphasized or a reminder is needed.
We write $\NN$ for the additive monoid of
natural numbers $\{0,1,2,\ldots\}$ and
$\NN_+=\{ n\in \NN\mid n>0\}$ for the positive integers.
In this paper, we use the categorical notations as in ~\cite{Ma}.

\section{Extensions of operators to cofree differential algebras and free Rota-Baxter algebras}
\mlabel{sec:ext}
After providing background on algebras with one operator, including Rota-Baxter algebras, differential algebras and their (co)free objects, we introduce the key concepts on algebras with two operators, including type $\omega$ operated differential algebras and type $\omega$ operated Rota-Baxter algebras. Suitable (co)extensions of operators to these algebras are studied.

\subsection{Free Rota-Baxter algebras and cofree differential algebras}
\mlabel{ss:trb}

We begin with some background on Rota-Baxter algebras defined in Eq.~(\mref{eq:bax1}). Additional details can be found in~\mcite{GK1,Gub}.
We first give a partial sum example of a Rota-Baxter algebra.

\begin{exam}$($\cite{Ba,Ro}$)$
{\rm
Let $R$ be the set of sequences $(a_n)_{n\in\NN_+}$ with values in $\bfk$. Then $R$ is an algebra with termwise addition, product and scalar product. Define an operator $P:R\rar R$ by
$$P(\mathfrak{a}):=(a_1,a_1+a_2,\cdots,\sum\limits_{1\leq i\leq n}a_i,\cdots)\quad \text{for each}\ \mathfrak{a}:=(a_1,a_2,\cdots, a_n,\cdots)\in R.$$
Then $P$ is a Rota-Baxter operator of weight $-1$. That is, for any $\mathfrak{a}, \mathfrak{b}\in R$,
$$P(\mathfrak{a})P(\mathfrak{b})=P(P(\mathfrak{a})\mathfrak{b})
+P(\mathfrak{a}P(\mathfrak{b}))-P(\mathfrak{a}\mathfrak{b}).$$
Further, the operator $Q:R\rar R$ defined by
$$Q(\mathfrak{a}):=(0,a_1,a_1+a_2,\cdots,\sum\limits_{i< n}a_i,\cdots)$$
is a Rota-Baxter operator of weight $1$.
}\mlabel{ex:parsum}
\end{exam}

Constructions of free commutative Rota-Baxter algebras on sets were first obtained by Rota and Cartier in~\cite{Ca,Ro}.
We recall from~\cite{GK1} the construction of the free commutative Rota-Baxter algebra $\sha(A)$ of weight $\lambda$ on a commutative
algebra $A$ with identity $\bfone_A$. As a module, we have
$$\sha(A) = \bigoplus\limits_{i\in\NN_+}A^{\otimes i}=A\oplus (A\otimes A)\oplus (A\otimes A\otimes A)\oplus\cdots$$
where the tensors are defined over $\bfk$.
The product for this free Rota-Baxter algebra on $A$ is constructed in terms of a generalization of the shuffle product, called the mixable shuffle product which in its recursive form is a natural generalization of the quasi-shuffle product~\cite{Ho}. We will recall the general construction of the mixable shuffle product as follows.

Let $\mathfrak{a}=a_0\otimes \cdots \otimes a_m\in A^{\otimes(m+1)}$ and $\mathfrak{b}=b_0\otimes \cdots \otimes b_n\in A^{\otimes(n+1)}$. If $mn=0$, define

\begin{equation}
\mathfrak{a}\diamond \mathfrak{b}=
\begin{cases}
(a_0b_0)\otimes b_1 \otimes\cdots \otimes b_n, & m=0, n>0, \\
(a_0b_0)\otimes a_1 \otimes\cdots \otimes a_m, & m>0, n=0, \\
a_0b_0,                                        & m=n=0. \\
\end{cases}
\mlabel{eq:mshprod1}
\end{equation}
If $m>0$ and $n>0$, then $\mathfrak{a}\diamond \mathfrak{b}$ is defined inductively on $m+n$ by
\begin{eqnarray}
&&(a_0b_0)\otimes\Big((a_1\otimes\cdots\otimes a_m)\diamond (\bfone_A\otimes b_1\otimes \cdots \otimes b_n)
                 +(\bfone_A\otimes a_1\otimes \cdots\otimes a_m)\diamond (b_1\otimes \cdots \otimes b_n)\notag\\
&&                 +\lambda (a_1\otimes\cdots\otimes a_m)\diamond (b_1\otimes \cdots \otimes b_n)\Big).
\mlabel{eq:mshprod2}
\end{eqnarray}
Extending by additivity, $\shpr$ gives a $\bfk$-bilinear map

\[ \shpr: \sha(A) \times \sha(A) \rar \sha(A). \]
Since the product $\shpr$ restricts to the product on $A$, we will usually suppress the symbol $\shpr$ and simply denote $x y$ for $x\shpr y$ in $\sha(A)$.

Define an operator $P_A$ on
$\sha(A)$ by assigning
\[ P_A( x_0\otimes x_1\otimes \ldots \otimes x_n):
=\bfone_A\otimes x_0\otimes x_1\otimes \ldots\otimes x_n \]
for all
$x_0\otimes x_1\otimes \ldots\otimes x_n\in A^{\otimes (n+1)}$
and extending by additivity. We note the fact of the mixable shuffle product on $\sha(A)$: for any $u_0\ot u' \in A^{\ot(n+1)}$ with $u'\in A^{\ot n}$,
$u_0\ot u'=u_0 P_A(u').$

\begin{theorem}$($\cite[Theorem~4.1]{GK1}$)$
The module $\sha(A)$, with the mixable shuffle product, the operator $P_A$ and the natural embedding
$j_A:A\rightarrow \sha(A)$,
is a free Rota-Baxter algebra of weight $\lambda$ on $A$.
More precisely,
for any Rota-Baxter algebra $(R,P)$ of weight $\lambda$ and any
algebra homomorphism
$\varphi:A\rar R$, there exists
a unique Rota-Baxter algebra homomorphism
$\tilde{\varphi}:(\sha(A),P_A)\rar (R,P)$ such that
$\varphi = \tilde{\varphi} j_A$.
\mlabel{thm:shua}
\end{theorem}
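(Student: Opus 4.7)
The plan is to establish three things in sequence: that the mixable shuffle product makes $\sha(A)$ into a commutative $\bfk$-algebra with $P_A$ a Rota-Baxter operator of weight $\lambda$, that $j_A$ is an algebra homomorphism, and that the prescribed universal property holds. The first point is essentially a structural verification that I would take from the underlying construction of the mixable shuffle product, where associativity and commutativity follow from the recursive definition together with an induction on the sum of tensor lengths. For the second point, I would compute $P_A(u) \cdot P_A(v) = (\bfone_A \otimes u)\cdot(\bfone_A \otimes v)$ using one step of the mixable shuffle recursion; the three resulting terms (from the two ``unshuffle'' choices and the ``merge'' choice weighted by $\lambda$) correspond precisely to $P_A(P_A(u) \cdot v)$, $P_A(u \cdot P_A(v))$, and $\lambda P_A(u \cdot v)$, which is the Rota-Baxter identity~(\mref{eq:bax1}).

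Next, I would address the universal property. Given a Rota-Baxter algebra $(R,P)$ of weight $\lambda$ and an algebra homomorphism $\varphi\colon A \to R$, I would \emph{define} $\tilde\varphi\colon \sha(A) \to R$ recursively on tensor length by
\begin{equation*}
\tilde\varphi(a) = \varphi(a) \quad\text{for } a\in A,\qquad
\tilde\varphi(a_0 \ot u') = \varphi(a_0)\, P\bigl(\tilde\varphi(u')\bigr) \quad\text{for } u' \in A^{\ot n},\ n\geq 1,
\end{equation*}
and then extend $\bfk$-linearly. This is forced by the requirements $\tilde\varphi j_A = \varphi$, $\tilde\varphi P_A = P\tilde\varphi$, and multiplicativity, using the identity $a_0 \ot u' = a_0\, P_A(u')$ noted just above the theorem; so uniqueness is immediate.

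The substantive step will be checking that $\tilde\varphi$ is multiplicative, i.e.\ $\tilde\varphi(u\cdot v) = \tilde\varphi(u)\tilde\varphi(v)$ for all $u,v \in \sha(A)$. I would proceed by double induction on the tensor lengths of $u$ and $v$. The base case (both in $A$) holds since $\varphi$ is an algebra map. For the inductive step, write $u = a_0\ot u'$ and $v = b_0 \ot v'$ and expand $u\cdot v$ using the recursive mixable shuffle formula $u\cdot v = a_0 b_0 \ot (u'\shpr (b_0\ot v') + (a_0\ot u')\shpr v' + \lambda\, u'\shpr v')$ (with appropriate interpretation when one factor has length one). Applying $\tilde\varphi$ to both sides and using the inductive hypothesis together with the Rota-Baxter identity for $P$ in $R$ collapses the three terms into $\varphi(a_0)P(\tilde\varphi(u'))\cdot \varphi(b_0) P(\tilde\varphi(v')) = \tilde\varphi(u)\tilde\varphi(v)$.

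The main obstacle is managing the bookkeeping in this multiplicativity induction: the three terms coming from the recursive mixable shuffle product must be matched against the three terms produced by the Rota-Baxter identity applied to $P(\tilde\varphi(u'))P(\tilde\varphi(v'))$, and one has to track the $\lambda$-weighted term correctly. Once this is done, the fact that $\tilde\varphi$ commutes with the operators and restricts to $\varphi$ on $A$ is a direct reading of the recursive definition, completing the verification that $(\sha(A), P_A, j_A)$ satisfies the required universal property.
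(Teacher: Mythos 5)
The paper itself gives no proof of this statement---it is imported verbatim from \cite[Theorem~4.1]{GK1}---so there is no internal argument to compare against. Your proposal reconstructs the standard proof from that reference: accept the algebra structure of the mixable shuffle product, check the Rota-Baxter identity for $P_A$ from one step of the recursion, and define $\tilde\varphi$ by the forced formula $\tilde\varphi(a_0\ot u')=\varphi(a_0)P(\tilde\varphi(u'))$, with uniqueness immediate from $u_0\ot u'=u_0P_A(u')$ and multiplicativity proved by induction on tensor length against the Rota-Baxter identity in $R$. The skeleton is sound and is the right approach.

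One correction is needed before the multiplicativity induction closes. The recursive product formula you quote is off: for $u=a_0\ot u'$ and $v=b_0\ot v'$ the recursion is $u\shpr v=a_0b_0\ot\bigl(P_A(u')\shpr v'+u'\shpr P_A(v')+\lambda\,u'\shpr v'\bigr)$, where $P_A(u')=\bfone_A\ot u'$ and $P_A(v')=\bfone_A\ot v'$; writing $a_0\ot u'$ and $b_0\ot v'$ in the inner terms, as you do, counts the leading entries $a_0$ and $b_0$ twice. (Your check of the Rota-Baxter identity for $P_A$ is unaffected only because there $a_0=b_0=\bfone_A$.) The corrected form is precisely the identity $a_0P_A(u')\cdot b_0P_A(v')=a_0b_0\,P_A\bigl(P_A(u')v'+u'P_A(v')+\lambda u'v'\bigr)$, and it is what makes the three terms match, after applying $\tilde\varphi$ and the inductive hypothesis, against the three terms of the Rota-Baxter identity for $P$ in $R$. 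With that fix, and noting that the induction is on the total tensor length (which strictly drops in each inner term), the argument goes through.
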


\smallskip

We next review some background on differential algebras with weights, defined in Eq.~(\mref{eq:der1}), and refer the reader to~\mcite{GK3} for details.
Note that a differential operator of weight $0$ is just a derivation
in the usual sense~\cite{Kol}.

Recall now an example that motivates the definition of a differential operator~\mcite{GK3}. Let $\RR$ denote the real number field, and let
$\lambda \in \RR$ with
$\lambda \neq 0$.  Let $A$ denote the $\RR$-algebra of
$\RR$-valued analytic functions on $\RR$, and consider the usual
`difference quotient' operator $d_\lambda$ on $A$ defined by $$(d_\lambda(f))(x) := \frac{f(x+\lambda) - f(x)}{\lambda}\quad\text{for all}\  f\in A, x\in \RR.$$
Then $d_\lambda$ is a differential operator of weight $\lambda$ on $A$.

We next recall the concept and basic properties of the algebra of $\lambda$-Hurwitz series~\cite{GK3} as a generalization of the ring of Hurwitz series~\cite{Ke}.
For any algebra $A$, let $A^{\NN}$ denote the $\bfk$-module of all functions $f:\NN \rightarrow A$. There is a one-to-one correspondence between elements $f \in A^{\NN}$ and sequences $(f_n)=(f_0,f_1,\cdots)$ with $f_n \in A$ given by $f_n := f(n)$ for all $n \in \NN$.
The {\bf $\lambda$-Hurwitz product}~\cite[\S~2.3]{GK3} on $A^{\NN}$ is given by
\begin{equation}
(fg)_n = \sum_{k=0}^{n}\sum_{j=0}^{n-k} {n\choose k} {n-k\choose j}
\lambda^{k}f_{n-j}g_{k+j}\quad \text{for all}\  f, g\in R^\NN.
\mlabel{eq:hurprod}
\end{equation}
In particular, if $\lambda =0$, then Eq.~(\mref{eq:hurprod}) becomes
\begin{equation}
(fg)_n =\sum_{j=0}^{n} {n\choose j} f_{n-j}g_{j}\quad \text{for all}\  f, g\in R^\NN.
\mlabel{eq:hurprod0}
\end{equation}

As in~\cite{GK3}, we call $A^{\NN}$ the {\bf algebra of $\lambda$-Hurwitz series} over $A$.
Further, the operator
\begin{equation*}
\partial_A:A^{\NN} \rightarrow A^{\NN}, \quad \partial_A(f)_n = f_{n+1}\quad\text{ for all }\  f \in A^{\NN}, n\in \NN,
\mlabel{eq:partial}
\end{equation*}
is a differential operator of weight $\lambda$ on $A^{\NN}$ and then $(A^{\NN},\partial_A)$ is a  differential algebra of weight $\lambda$.
This property gives a recursive formula for $(fg)_n$:
\begin{equation}
(fg)_{n+1}=(\partial_A(fg))_n =(\partial_A(f)g)_n+(f\partial_A(g))_n+
(\lambda\partial_A(f)\partial_A(g))_n \quad\text{ for all }\  f, g \in A^{\NN}, n\in \NN.
\mlabel{eq:recHur}
\end{equation}

\begin{prop}$($\cite[Proposition~2.8]{GK3}$)$
For any algebra $A$, the differential algebra $(A^{\NN},\partial_A)$, together with the algebra homomorphism
$$\varepsilon_{A}: A^{\NN}\rightarrow A, \quad \varepsilon_A(f): = f_0\quad\text{for all}\  f \in A^{\NN},
$$
is a cofree differential algebra of weight $\lambda$ on the algebra $A$.
More precisely,
for any differential algebra $(R,d)$ of weight $\lambda$ and any
algebra homomorphism
$\varphi:R\rar A$, there exists
a unique differential algebra homomorphism
$\tilde{\varphi}:(R,d)\to (A^\NN,\partial_A)$ such that
$\varphi =\vep_A \tilde{\varphi}$.
\mlabel{prop:cofree}
\end{prop}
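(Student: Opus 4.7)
The plan is to construct $\tilde\varphi$ explicitly by iterating the differential operator $d$ on $R$ and reading off the successive coefficients of the target Hurwitz series through $\varphi$. Precisely, I would set
\[
\tilde\varphi : R \longrightarrow A^{\NN}, \qquad \tilde\varphi(r)_n := \varphi(d^n(r)) \quad \text{for all } r\in R,\ n\in\NN.
\]
Then $\varepsilon_A \tilde\varphi = \varphi$ is immediate from $\tilde\varphi(r)_0 = \varphi(r)$, and compatibility with the derivations is equally transparent:
\[
(\partial_A \tilde\varphi(r))_n = \tilde\varphi(r)_{n+1} = \varphi(d^{n+1}(r)) = \tilde\varphi(d(r))_n.
\]
Since $d(\bfone_R)=0$, we get $\tilde\varphi(\bfone_R)_n = \delta_{n,0}\bfone_A$, which is the multiplicative identity of $A^{\NN}$.

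The main technical step is verifying that $\tilde\varphi$ is multiplicative. By induction on $n$, using the weight-$\lambda$ Leibniz rule (Eq.~(\mref{eq:der1})), I would first establish the higher Leibniz formula
\[
d^n(rs) \;=\; \sum_{k=0}^{n}\sum_{j=0}^{n-k} \binom{n}{k}\binom{n-k}{j}\lambda^{k}\,d^{n-j}(r)\,d^{k+j}(s) \qquad \text{for all } r,s\in R,\ n\in\NN.
\]
Applying $\varphi$ and comparing with the Hurwitz product formula (Eq.~(\mref{eq:hurprod})) then yields $\tilde\varphi(rs)_n = (\tilde\varphi(r)\tilde\varphi(s))_n$ for every $n$, so $\tilde\varphi$ is an algebra homomorphism, and hence a differential algebra homomorphism $(R,d)\to (A^{\NN},\partial_A)$.

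For uniqueness, suppose $\psi:(R,d)\to (A^{\NN},\partial_A)$ is any differential algebra homomorphism with $\varepsilon_A\psi = \varphi$. Then for each $r\in R$ and $n\in\NN$,
\[
\psi(r)_n = \varepsilon_A\bigl(\partial_A^{\,n}\psi(r)\bigr) = \varepsilon_A\bigl(\psi(d^n(r))\bigr) = \varphi(d^n(r)) = \tilde\varphi(r)_n,
\]
so $\psi=\tilde\varphi$. The main obstacle is the higher Leibniz formula displayed above: the induction step requires regrouping the three terms arising when $d$ is applied to $d^{n-j}(r)\,d^{k+j}(s)$ via the weight-$\lambda$ Leibniz rule, and reindexing the resulting binomial sums so that they match the right-hand side in exponent $n+1$. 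These are Vandermonde-type manipulations, routine in principle but bookkeeping-heavy, and this is where the bulk of the proof's labor will go.
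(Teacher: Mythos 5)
Your construction $\tilde\varphi(r)_n=\varphi(d^n(r))$, the unit/derivation checks, the higher Leibniz formula matched against Eq.~(\mref{eq:hurprod}), and the uniqueness argument via $\partial_A^n\psi=\psi d^n$ are all correct; this is exactly the standard proof of the cited result, which the paper itself does not reprove but quotes from \cite[Proposition~2.8]{GK3}. As a minor simplification, the multiplicativity can be obtained by induction on $n$ directly from the recursion in Eq.~(\mref{eq:recHur}) applied to $\tilde\varphi(d(r)s+rd(s)+\lambda d(r)d(s))_n$, which sidesteps the closed-form higher Leibniz identity and its Vandermonde bookkeeping entirely.
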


\subsection{(Co)extensions of operators}
\mlabel{subsec:coex}

For an operator on an algebra, we construct the coextension of the operator to the cofree differential algebra generated by this algebra. Further, assuming that the value of the operator on the identity of the algebra is zero, we also construct the extension of the operator to the free Rota-Baxter algebra generated by the algebra. For this purpose, we introduce a class of variations of operated algebras by enriching them with a differential operator or Rota-Baxter operator.

Let $\bfk\langle x, y\rangle$ be the free $\bfk$-algebra of polynomials in two noncommutative variables $x$ and $y$.
Consider the subset of $\bfk\langle x,y\rangle$:
\begin{equation}
\calt:=xy+\bfk[x]+y\bfk[x] = \{
xy-(\phi(x) +y\psi(x))\,|\, \phi,\psi\in\bfk[x]\}.
\mlabel{eq:T}
\end{equation}
Let $q$ and $Q$ be two operators on an algebra $R$. For each $\omega:=\omega(x, y)=xy-(\phi(x) +y\psi(x))\in\calt$, we regard $\omega(q, Q)=0$ as a relation between $q$ and $Q$ which describes how the operators $q$ and $Q$ interact with each other. For example, when $\omega=xy-1$, $\omega(d, P)=dP-\id_R =0$ amounts to the relation between the operators $d$ and $P$ in a differential Rota-Baxter algebra $(R, d, P)$~\cite{GK3} arising from the First Fundamental Theorem of Calculus.

Recall from the introduction that an operated algebra is an algebra $R$ with a linear operator $Q$ on $R$, thus denoted as a pair $(R, Q)$.
\begin{defn}
For a given $\omega\in\calt$ and $\lambda\in \bfk$, we say that the triple $(R,d,Q)$ is a
{\bf type $\omega$ operated differential algebra of weight $\lambda$} if
\begin{enumerate}
\item $(R,d)$ is a differential algebra of weight $\lambda$,
\item $(R,Q)$ is an operated algebra, and
\item $\omega(d, Q)=0$, that is,
   \begin{equation}
   dQ=\phi(d)+Q\psi(d).
   \mlabel{eq:dP}
   \end{equation}
\end{enumerate}
\mlabel{de:oda}
\end{defn}
As noted before, every $f\in R^\NN$ is identified with a sequence $(f_n)$ of elements in $R$. Likewise, there is a one-to-one correspondence between operators $\calp$ on $R^\NN$ and sequences $(\calp_n)$ of linear maps where, for each $n\in\NN$, $\calp_n:R^\NN\rightarrow R$ is given by
\begin{equation*}
\calp_n(f):=\calp(f)_n\quad\text{for all}\  f\in R^\NN.
\mlabel{eq:coexten}
\end{equation*}
For any operators $\mathcal{Q}, \mathcal{J}$ on $R^\NN$, and each $f\in R^\NN, n\in\NN$, we obtain
\begin{equation}
(\partial_R\mathcal{Q})_n (f)=(\partial_R(\mathcal{Q}(f)))_n=\mathcal{Q}_{n+1}(f),\quad (\mathcal{Q}\mathcal{J})_n(f)=(\mathcal{Q}(\mathcal{J}(f)))_n =\mathcal{Q}_n(\mathcal{J}(f)).
\mlabel{eq:pf0}
\end{equation}
For the remainder of the paper, we prefer to use $\calp_n(f)$ in place of $\calp(f)_n$.

We first consider coextensions of operators.
\begin{defn}
For a given operator $Q:R\to R$, we call an operator $\lqb:R^\NN\to R^\NN$ a {\bf coextension of $Q$ to $R^\NN$} if for all $f\in R^\NN$, we have $\lqb_0(f)=Q(f_0)$.
\end{defn}
We now establish the existence and uniqueness of a coextension.
\begin{prop}
Let $Q$ be an operator on an algebra $R$. For a given $\omega=xy-(\phi(x)+y\psi(x))\in \calt$ with $\phi, \psi\in \bfk[x]$, $Q$ has a unique coextension $\lqt:(R^\NN, \partial_R)\to (R^\NN, \partial_R)$ such that $\omega(\partial_R, \lqt) = 0$, that is:
\begin{equation}
\partial_R \lqt=\phi(\partial_R) +\lqt\psi(\partial_R).
\mlabel{eq:lpproperty}
\end{equation}
Thus the triple $(R^\NN, \partial_R, \lqt)$ is a type $\omega$ operated differential algebra.
\mlabel{prop:extrb}
\end{prop}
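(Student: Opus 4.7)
The plan is to translate the problem into one about sequences of $\bfk$-linear maps $R^\NN\to R$ using the correspondence recalled just before the statement: an operator $\calp$ on $R^\NN$ is the same data as a sequence $(\calp_n)_{n\in\NN}$ with $\calp_n:R^\NN\to R$ given by $\calp_n(f)=\calp(f)_n$. Under this correspondence, $\bfk$-linearity of $\calp$ is equivalent to $\bfk$-linearity of each $\calp_n$. So I would aim to construct the sequence $(\lqst_n)$ satisfying the required conditions, and show the sequence is uniquely determined.

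First I would rewrite the two conditions in terms of components. Write $\phi(x)=\sum_{i}a_i x^i$ and $\psi(x)=\sum_i b_i x^i$ with $a_i,b_i\in\bfk$. Using $(\partial_R^i(f))_n=f_{n+i}$, the relation $\omega(\partial_R,\lqst)=0$ applied at level $n$ becomes, via (\mref{eq:pf0}),
\begin{equation*}
\lqst_{n+1}(f)=\sum_i a_i f_{n+i}+\lqst_n\!\left(\psi(\partial_R)(f)\right)\quad\text{for all } f\in R^\NN,\ n\in\NN,
\end{equation*}
while the coextension condition reads $\lqst_0(f)=Q(f_0)$.

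Next, I would use these as a definition: set $\lqst_0(f):=Q(f_0)$, and define $\lqst_{n+1}$ by the displayed recursion. Induction on $n$ shows simultaneously that $\lqst_n$ is well-defined and $\bfk$-linear, since $f\mapsto f_0$, $f\mapsto f_{n+i}$, and $f\mapsto\psi(\partial_R)(f)$ are all $\bfk$-linear and the composition $\lqst_n\circ\psi(\partial_R)$ inherits linearity from $\lqst_n$. Assembling the $\lqst_n$ into a single operator $\lqst:R^\NN\to R^\NN$ then produces a $\bfk$-linear coextension satisfying (\mref{eq:lpproperty}) by construction.

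Uniqueness is immediate from the same recursion: any operator $\lqst'$ meeting the two conditions must have the same $\lqst'_0$ (forced by the coextension condition) and, once $\lqst'_n=\lqst_n$, the recursion forces $\lqst'_{n+1}=\lqst_{n+1}$. There is no real obstacle here beyond bookkeeping; the only point to be slightly careful about is that $\phi$ and $\psi$ are arbitrary polynomials (not monomials), so the recursion must be read as a finite sum indexed by the coefficients of $\phi$ and $\psi$, and one should note that no assumption on $Q(\bfone_R)$ is needed because the coextension is to the cofree differential algebra rather than into a free Rota–Baxter algebra.
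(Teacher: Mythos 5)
Your proposal is correct and follows essentially the same route as the paper: both convert the operator equation into the componentwise recursion $\lqt_{n+1}=\phi(\partial_R)_n+\lqt_n\psi(\partial_R)$ with initial condition $\lqt_0(f)=Q(f_0)$, and conclude existence and uniqueness from the recursion. You merely spell out the linearity check and the expansion $\phi(\partial_R)_n(f)=\sum_i a_i f_{n+i}$, which the paper leaves implicit.
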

\begin{proof}
Let $\calp$ be an operator on $R^\NN$, giving by the sequence $(\calp_n)$ through the above one-to-one correspondence. Then applying Eq.~(\mref{eq:pf0}), the equation $\partial_R \calp=\phi(\partial_R)+\calp \psi(\partial_R)$ is equivalent to
\begin{equation}
\calp_{n+1}=(\partial_R \calp)_n=\phi(\partial_R)_n +\calp_n\psi(\partial_R)\quad\text{for all}\  n\in\NN.
\mlabel{eq:lptn}
\end{equation}
Thus a coextension $\lqb$ of $Q$ satisfying Eq.~(\mref{eq:lpproperty}) is equivalent to a solution $(\lqb_n)$ of the recursion in Eq.~(\mref{eq:lptn}) with $\calp=\lqb$ and with the initial condition
\begin{equation}
\lqb_0(f)=Q(f_0)\quad\text{for all}\  f\in R^\NN.
\mlabel{eq:lpt0}
\end{equation}
Then the proposition follows since this recursion has a unique solution.
\end{proof}

Next, we consider extensions of operators.
\begin{defn}
For a given $\omega\in\calt$ and $\lambda\in \bfk$, we say that the triple $(R,q,P)$ is a
{\bf type $\omega$ operated Rota-Baxter algebra of weight $\lambda$} if
\begin{enumerate}
\item $(R,q)$ is an operated algebra with the property $q(\bfone_R)=0$,
\item $(R,P)$ is a Rota-Baxter algebra of weight $\lambda$, and
\item $\omega(q, P)=0$, that is,
   \begin{equation*}
   qP=\phi(q)+P\psi(q).
   \mlabel{eq:qP}
   \end{equation*}
\end{enumerate}
\end{defn}
\begin{defn}
For a given operator $q:R\to R$ satisfying $q(\bfone_R)=0$, we call an operator $\lqs:\sha(R)\to \sha(R)$ an {\bf extension} of $q$ to $\sha(R)$ if $\lqs|_R=q$.
\end{defn}
By the definition of the mixable shuffle product on $\sha(R)$, we obtain $\bfone_{\ssha(R)}=\bfone_R$. Then an extension $\lqs$ of $q$ to $\sha(R)$ satisfies $\lqs(\bfone_{\ssha(R)})=q(\bfone_R)=0$.

\begin{prop}
Let $(R, q)$ be an operated algebra where the operator $q$ satisfies $q(\bfone_R)=0$. For a given $\omega=xy-(\phi(x)+y\psi(x))\in \calt$ with $\phi, \psi\in\bfk[x]$, $q$ has a unique extension
$$\lqst: (\sha(R), P_R)\rightarrow (\sha(R), P_R)$$
with the following property: for $u=u_0\ot u'\in R^{\ot (n+1)}$ with $u'\in R^{\ot n}$,
\begin{equation}
\lqst(u)=q(u_0)\ot u'+(u_0+\lambda q(u_0))(\phi(\lqst)+ P_R\psi(\lqst))(u')
\mlabel{eq:1leibdform}
\end{equation}
and
\begin{equation}
\lqst(\oplus_{i=1}^{n}R^{\ot i})\subseteq\oplus_{i=1}^{n}R^{\ot i}\quad\text{for each}\  n\in\NN_+.
\mlabel{eq:qcon}
\end{equation}
Therefore,
\begin{equation}
\lqst P_R=\phi(\lqst)+ P_R\psi(\lqst)
\mlabel{eq:qqcon}
\end{equation}
and then the triple
$(\sha(R), \lqst, P_R)$ is a type $\omega$ operated Rota-Baxter algebra.
\mlabel{prop:extd}
\end{prop}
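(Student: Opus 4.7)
The plan is to construct $\lqst$ recursively by induction on the tensor degree in $\sha(R) = \bigoplus_{i\geq 1} R^{\otimes i}$, taking Eq.~(\mref{eq:1leibdform}) itself as the defining recursion while carrying the containment Eq.~(\mref{eq:qcon}) as an auxiliary invariant that keeps the recursion well-defined at each stage. In degree~$1$, I set $\lqst|_{R} := q$, which satisfies the required restriction $\lqst_{1} = q$ and gives $\lqst(R) \subseteq R$ as the base case.

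For the inductive step, suppose $\lqst$ has been defined on $\bigoplus_{i=1}^{N-1} R^{\otimes i}$ with $\lqst(\bigoplus_{i=1}^{m} R^{\otimes i}) \subseteq \bigoplus_{i=1}^{m} R^{\otimes i}$ for every $m \leq N-1$. For $u = u_{0} \otimes u' \in R^{\otimes N}$ with $u' \in R^{\otimes (N-1)}$ (so $N \geq 2$), I will use Eq.~(\mref{eq:1leibdform}) as the definition of $\lqst(u)$. The right-hand side is meaningful because $\phi(\lqst)$ and $\psi(\lqst)$ are finite polynomials in $\lqst$, so their evaluations at $u'$ reduce to finite linear combinations of iterates $\lqst^{k}(u')$; by the inductive containment, each such iterate lies in $\bigoplus_{i=1}^{N-1} R^{\otimes i}$, precisely the region on which $\lqst$ has already been built.

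Next I will propagate the invariant Eq.~(\mref{eq:qcon}) one degree higher. The summand $q(u_{0}) \otimes u'$ sits in $R^{\otimes N}$. For the other summand, $\phi(\lqst)(u') \in \bigoplus_{i=1}^{N-1} R^{\otimes i}$, and $P_{R}$ raises tensor degree by exactly one, so $P_{R}\psi(\lqst)(u') \in \bigoplus_{i=2}^{N} R^{\otimes i}$; hence $(\phi(\lqst) + P_{R}\psi(\lqst))(u') \in \bigoplus_{i=1}^{N} R^{\otimes i}$. The elementary fact from the mixable shuffle product that $a \cdot (b_{0} \otimes b') = (a b_{0}) \otimes b'$ for $a \in R$ (readable off the identity $u = u_{0}P_{R}(u')$) shows that multiplying by $u_{0} + \lambda q(u_{0}) \in R$ preserves tensor degree. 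Thus $\lqst(u) \in \bigoplus_{i=1}^{N} R^{\otimes i}$, closing the induction on Eq.~(\mref{eq:qcon}). Extending linearly over all of $\sha(R)$ produces the desired operator.

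Uniqueness is then essentially automatic: any operator on $\sha(R)$ extending $q$ and obeying Eq.~(\mref{eq:1leibdform}) must, by the same recursion and induction on $N$, agree with $\lqst$ on each $R^{\otimes N}$. The principal obstacle I anticipate is the bookkeeping in the inductive step, where one must simultaneously track the polynomial iterates $\phi(\lqst)$ and $\psi(\lqst)$, the degree-raising effect of $P_{R}$, and the possibly degree-lowering contractions in the mixable shuffle product, so that nothing escapes the filtered piece $\bigoplus_{i=1}^{N} R^{\otimes i}$ on which $\lqst$ has been defined.
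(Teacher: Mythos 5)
Your proposal is correct and follows essentially the same route as the paper: induction on tensor degree, using Eq.~(\mref{eq:1leibdform}) as the defining recursion with the containment Eq.~(\mref{eq:qcon}) carried along as the invariant that keeps each step well-defined (the paper packages the linear extension over $\sha(R)$ via the direct-limit correspondence with compatible sequences $(\lqst_n)$, but the substance is identical). Your explicit degree bookkeeping for $q(u_0)\ot u'$, $\phi(\lqst)(u')$, and $P_R\psi(\lqst)(u')$, and the observation that multiplication by an element of $R$ preserves tensor degree, fills in details the paper leaves implicit.
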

\begin{proof}
Since $\sha(R)$ is the direct limit (via taking union) of its submodules $\oplus_{i=1}^n R^{\ot i}, n\in \NN_+$, by~\cite[Proposition~5.26]{Rot},
there is a one-to-one correspondence between operators $\mathcal{D}:\sha(R)\to \sha(R)$ and compatible sequences $(\mathcal{D}_n)$ of linear maps $\mathcal{D}_n:\oplus_{i=1}^{n}R^{\ot i}\to \sha(R)$ where the compatible condition means $\mathcal{D}_{n}=\mathcal{D}_{n+1}|_{\oplus_{i=1}^{n}R^{\ot i}}$ for each $n\in\NN_+$.
Thus we just need to show by induction on $n\in\NN_+$ that there is a unique compatible sequence $(\lqst_n)$ with $\lqst_n:\oplus_{i=1}^n R^{\ot i} \rightarrow \sha(R)$ extending $q$ and satisfying Eqs.~(\ref{eq:1leibdform}), (\ref{eq:qcon}) when $\lqst$ is replaced by $\lqst_n$.

As $\lqst:\sha(R)\to \sha(R)$ is an extension of $q$, we have $\lqst_1=q$ and $\lqst_1(R)\subseteq R$, verifying the case when $n=1$.

For a given $k\in\NN_+$, assume that the required $\lqst_k$ on $\oplus_{i=1}^{k}R^{\ot i}$ has been defined. Define $\lqst_{k+1}$ on $\oplus_{i=1}^{k+1} R^{\ot i}$ by first taking $\lqst_{k+1}|_{\oplus_{i=1}^kR^{\ot i}}=\lqst_k$. Further consider $v\in R^{\ot (k+1)}$ and write $v=v_0\ot v'$ with $v'\in R^{\ot k}$. By the induction hypothesis, $(\phi(\lqst_{k+1}))(v')=(\phi(\lqst_k))(v')$ and $(\psi(\lqst_{k+1}))(v')=(\psi(\lqst_k))(v')$ are also defined and satisfy
\begin{equation*}
(\phi(\lqst_{k+1}))(v')\subseteq \oplus_{i=1}^k R^{\ot i},\quad (\psi(\lqst_{k+1}))(v')\subseteq \oplus_{i=1}^k R^{\ot i}.
\mlabel{eq:phipsi}
\end{equation*}
Then we can uniquely define
$$\lqst_{k+1}(v):=q(v_0)\ot v'+(v_0+\lambda q(v_0))(\phi(\lqst_{k+1})+ P_R\psi(\lqst_{k+1}))(v').$$
Then $\lqst_{k+1}(v) \in \oplus_{i=1}^{k+1} R^{\ot i}$.
This completes the construction of the desired $\lqst_{k+1}$ and the induction.

By Eq.~(\ref{eq:1leibdform}), we obtain
$$(\lqst P_R)(u)=\lqst(\bfone_R\ot u)=q(\bfone_R)\ot u+(\bfone_R+\lambda q(\bfone_R))(\phi(\lqst)+ P_R\psi(\lqst))(u)=(\phi(\lqst)+ P_R\psi(\lqst))(u).$$ Thus Eq.~(\ref{eq:qqcon}) holds.
\end{proof}

\section{Liftings of comonads and monads, and mixed distributive laws}
\mlabel{sec:lift}
In this section, we characterize (co)extensions of operators in terms of liftings of (co)monads, as well as mixed distributive laws, summarized in the result of this paper, Theorem~\mref{thm:main}.

\subsection{The monad giving Rota-Baxter algebras and comonad giving differential algebras}
We first recall from~\mcite{ZGK} the monad giving Rota-Baxter algebras.

We let $\ALG$ denote the category of commutative algebras, and let $\RBA_{\lambda}$, or simply $\RBA$, denote the category
of commutative Rota-Baxter algebras of weight $\lambda$.

Let $U: \RBA \rar \ALG$
denote the forgetful functor by forgetting the Rota-Baxter operator.
Let $F:\ALG \rar \RBA$ denote the functor given on
objects $A$ in $\ALG$ by $F(A) = (\sha(A),P_A)$ and on morphisms
$\varphi:A \rar B$ in $\ALG$ by $$F(\varphi)\left(\sum^{k}_{i=1}a_{i0}\otimes a_{i1}\otimes\cdots \otimes a_{in_{i}}\right)=\sum^{k}_{i=1}\varphi(a_{i0})\otimes \varphi(a_{i1})\otimes\cdots \otimes \varphi(a_{in_{i}})$$
for any $\sum\limits^{k}_{i=1}a_{i0}\otimes a_{i1}\otimes\cdots \otimes a_{in_{i}}\in\sha(A)$.

Define a natural transformation
$$\eta: \id_{\ALG} \rightarrow U F $$
with
$$\eta_{R}:R\rightarrow (U F)(R) = \sha(R)\quad\text{for any}\  R\in\ALG$$
to be just the natural embedding $j_R: R \rar \sha(R)$.
There is also a natural transformation
$$\varepsilon: F U \rightarrow \id_{\RBA}$$
with
$$\varepsilon_{(R,P)}: (F  U)(R,P) = (\sha(R),P_R) \rightarrow (R,P)\quad \text{ for each }\, (R,P) \in \RBA,$$
defined by
\begin{equation}
\varepsilon_{(R,P)}\left(\sum^{k}_{i=1}u_{i0}\otimes u_{i1}\otimes\cdots \otimes u_{in_{i}}\right)=
\sum^{k}_{i=1}u_{i0}P(u_{i1}P(\cdots P(u_{in_{i}})\cdots))
\mlabel{eq:vare0}
\end{equation}
for any $\sum\limits_{i=1}^{k}u_{i0}\otimes u_{i1}\otimes\cdots \otimes u_{in_{i}} \in \sha(R).$

From a general principle of category theory~\mcite{Ma,ZGK}, equivalent to Theorem~\ref{thm:shua}, we have

\begin{coro}
The functor $F:\ALG \rightarrow \RBA$ defined above is the left adjoint of the forgetful functor
$U:\RBA \rightarrow \ALG$. In other words, there is an adjunction $\langle F, U, \eta, \varepsilon \rangle:\ALG\rightharpoonup\RBA$.
\mlabel{co:adjrba}
\end{coro}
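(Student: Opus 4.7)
The plan is to deduce the corollary directly from the universal property in Theorem~\mref{thm:shua} using the standard equivalence between universal arrows and adjunctions (see e.g.~\mcite{Ma}). The key observation is that the natural embedding $\eta_R = j_R : R \to \sha(R)$ together with the free object $F(R) = (\sha(R), P_R)$ is precisely a universal arrow from $R$ to the forgetful functor $U$.

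First I would verify that $F$ is a bona fide functor $\ALG \to \RBA$. For each algebra homomorphism $\varphi: A \to B$, composing with $j_B: B \to \sha(B)$ gives an algebra homomorphism $A \to \sha(B) = U F(B)$, and Theorem~\mref{thm:shua} then produces a unique Rota-Baxter algebra homomorphism $F(\varphi): F(A) \to F(B)$ making the obvious square commute; this coincides with the formula given in the definition of $F$. Functoriality ($F(\id_A)=\id_{F(A)}$ and $F(\psi\varphi)=F(\psi)F(\varphi)$) follows from the uniqueness clause of Theorem~\mref{thm:shua}, since both sides satisfy the same universal requirement. Naturality of $\eta$ is immediate from the defining equation $U F(\varphi) \circ j_A = j_B \circ \varphi$.

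Next I would establish the hom-set bijection. For each $R \in \ALG$ and $(S,P)\in \RBA$, define
\[
\Phi_{R,(S,P)} : \Hom_{\RBA}(F(R),(S,P)) \to \Hom_{\ALG}(R, U(S,P)), \quad \tilde\varphi \mapsto U(\tilde\varphi)\circ \eta_R.
\]
Theorem~\mref{thm:shua} says precisely that $\Phi_{R,(S,P)}$ is a bijection: every $\varphi:R\to U(S,P)$ extends uniquely to some $\tilde\varphi:F(R)\to (S,P)$. Naturality of $\Phi$ in both variables follows from the uniqueness assertion in Theorem~\mref{thm:shua} together with the functoriality of $F$ and $U$. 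This already establishes the adjunction $F\dashv U$.

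Finally I would identify the counit of this adjunction with the $\varepsilon$ given in Eq.~(\mref{eq:vare0}). By the general recipe, $\varepsilon_{(S,P)} = \Phi_{U(S,P),(S,P)}^{-1}(\id_{U(S,P)})$ is the unique Rota-Baxter algebra homomorphism $(\sha(S), P_S) \to (S,P)$ whose restriction to $S$ is the identity. A straightforward induction on tensor length, using that $\varepsilon_{(S,P)}$ must commute with $P_S$ and $P$ and be multiplicative, shows that it is forced to send $u_0\otimes u_1\otimes\cdots\otimes u_n$ to $u_0 P(u_1 P(\cdots P(u_n)\cdots))$, matching Eq.~(\mref{eq:vare0}). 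The only real point to check along the way is that this formula is consistent with the mixable shuffle product and the Rota-Baxter identity on the target, but this is immediate from the uniqueness half of Theorem~\mref{thm:shua}. The triangle identities then hold automatically, since they are equivalent to the statement that $\Phi$ and its inverse are mutually inverse bijections, which we have already verified. The mildest obstacle is bookkeeping with the uniqueness-based identifications, but no genuine calculation is required beyond what Theorem~\mref{thm:shua} already provides.
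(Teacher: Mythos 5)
Your proposal is correct and takes essentially the same route as the paper, which simply invokes the general principle (citing Mac Lane) that the universal property in Theorem~\ref{thm:shua} is equivalent to the adjunction; you have merely written out the standard details (functoriality of $F$, the hom-set bijection, and the identification of the counit via $u_0\otimes u' = u_0 P_A(u')$) that the paper leaves to the citation.
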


The adjunction $\langle F, U, \eta, \varepsilon \rangle:\ALG\rightharpoonup\RBA$ gives rise to a monad $\mathbf{T}=\mathbf{T}_{\RBA}=\langle T,\eta,\mu\rangle$ on $\ALG$, where $T$ is the functor $T:=U F:\ALG \rightarrow \ALG$ and $\mu$ is the natural transformation defined by $\mu:=U\varepsilon F: TT\rightarrow T.$

From~\cite{Ma}, the monad $\mathbf{T}$ induces a category
of $\mathbf{T}$-algebras, denoted by $\ALG^{\mathbf{T}}$. The objects in $\ALG^{\mathbf{T}}$ are pairs $\langle A,h\rangle$, where $A$ is in $\ALG$ and
$h:\sha(A)\rightarrow A$ is an algebra homomorphism satisfying
\begin{equation*}
h\eta_A=\id_A,\quad\quad h T(h)=h \mu_{A}.
\mlabel{eq:Tcate}
\end{equation*}
A morphism $\phi:\langle R,f\rangle\rightarrow\langle S,g\rangle$ in $\ALG^{\mathbf{T}}$ is an algebra homomorphism $\phi:R\rightarrow S$ such that $g T(\phi)=\phi  f$.

Further, the monad $\mathbf{T}$ gives rise to an adjunction
$$\langle F^{\mathbf{T}}, U^{\mathbf{T}},\eta^{\mathbf{T}} , \varepsilon^{\mathbf{T}} \rangle:\ALG
\rightharpoonup \ALG^{\mathbf{T}},$$
where the functor
$$F^{\mathbf{T}} : \ALG \rightarrow \ALG^{\mathbf{T}}$$
is defined on objects $A$ in $\ALG$
by $F^{\mathbf{T}}(A) = \langle \sha(A),\mu_{A}\rangle$.
The functor
$$U^{\mathbf{T}}:\ALG^{\mathbf{T}}\rightarrow \ALG$$
is given on objects
$\langle A,h\rangle$ in $\ALG^{\mathbf{T}}$ by $U^{\mathbf{T}}\langle A,h\rangle = A$.
The natural transformations $\eta^{\mathbf{T}}$ and $\varepsilon^{\mathbf{T}}$
 are defined similarly as $\eta$ and $\varepsilon$, respectively.
Then there is a uniquely defined comparison functor
$$K:\RBA \rightarrow \ALG^{\mathbf{T}},\quad K(R, P)=\langle R,U(\varepsilon_{(R,P)})\rangle \quad\text{for any}\  (R, P)\in\RBA$$ such that $K F=F^{\mathbf{T}}$
and $U^{\mathbf{T}} K=U$.

\begin{coro}$($\cite[Corollary~2.6]{ZGK}$)$
The comparison functor $K:\RBA \rightarrow \ALG^{\mathbf{T}}$ is
an isomorphism, that is, $\RBA$ is monadic over $\ALG$.
\mlabel{co:rbmon}
\end{coro}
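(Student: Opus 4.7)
The plan is to construct an explicit inverse $L:\ALG^{\mathbf{T}}\to\RBA$ to $K$, which is feasible because the counit $\varepsilon_{(R,P)}$ is given by the concrete formula~(\mref{eq:vare0}). For a $\mathbf{T}$-algebra $\langle A,h\rangle$, I would define an operator
\begin{equation*}
P_h:A\to A,\qquad P_h(a):=h(\bfone_A\ot a)=h(P_A(j_A(a))),
\end{equation*}
and set $L\langle A,h\rangle:=(A,P_h)$ while leaving morphisms unchanged.

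The central technical ingredient is the identity $h\circ P_A=P_h\circ h$ on all of $\sha(A)$, which I would obtain by applying the monad-associativity axiom $h\circ T(h)=h\circ\mu_A$ to the element $\bfone_{\ssha(A)}\ot x\in\sha(\sha(A))$ for arbitrary $x\in\sha(A)$. From the explicit descriptions of $\mu_A=U\varepsilon_{F(A)}$ and $T(h)=UF(h)$ on such an element, the two sides unpack to $P_h(h(x))$ and $h(P_A(x))$ respectively, yielding the desired identity. Once this is in hand, the Rota-Baxter identity for $P_h$ is a short calculation: expand $P_h(a)P_h(b)=h\bigl((\bfone_A\ot a)(\bfone_A\ot b)\bigr)$ using the Rota-Baxter identity for $P_A$ in $\sha(A)$, then repeatedly apply $h\circ P_A=P_h\circ h$ together with the fact that $h$ is an algebra homomorphism to reduce each term to the desired form. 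For morphisms, evaluating $g\circ T(\phi)=\phi\circ f$ at $\bfone_A\ot a$ gives $P_g\circ\phi=\phi\circ P_f$, so $L$ is indeed a functor into $\RBA$.

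It then remains to show that $L$ and $K$ are mutually inverse. The equality $L\circ K=\id_{\RBA}$ is immediate from~(\mref{eq:vare0}): $P_h(a)=\varepsilon_{(R,P)}(\bfone_R\ot a)=\bfone_R\cdot P(a)=P(a)$. For $K\circ L=\id_{\ALG^{\mathbf{T}}}$, starting from $\langle A,h\rangle$ and forming $h':=U\varepsilon_{(A,P_h)}$, I would prove $h=h'$ on each $A^{\ot(n+1)}\subseteq\sha(A)$ by induction on $n$, using the recursive identity $u_0\ot u'=u_0\cdot P_A(u')$ recorded just before Theorem~\mref{thm:shua}; the base case $n=0$ is $h\circ\eta_A=\id_A$, while the inductive step combines the multiplicativity of $h$ with $h\circ P_A=P_h\circ h$ to match the iterated expression defining $h'$ from Eq.~(\mref{eq:vare0}). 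The principal obstacle is establishing $h\circ P_A=P_h\circ h$ on \emph{all} of $\sha(A)$ rather than just on the degree-one part; everything else is formal or a direct induction, and this is precisely where the monad-multiplication axiom enters essentially, letting one trade an inner application of $P_A$ inside $\sha(A)$ for an outer application of $P_h$ in $A$.
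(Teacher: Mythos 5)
Your proposal is correct, and every step checks out against the structures defined in the paper: the associativity axiom $h\circ T(h)=h\circ\mu_A$ evaluated at $\bfone_{\ssha(A)}\ot x$ does yield $P_h(h(x))=h(P_A(x))$ (using $h(\bfone_{\ssha(A)})=\bfone_A$ and the explicit formula for $\mu_A=U\varepsilon_{F(A)}$ coming from Eq.~(\mref{eq:vare0})); the unit axiom $h\eta_A=\id_A$ gives $h|_A=\id_A$, so the Rota-Baxter identity for $P_h$ follows from that of $P_A$ exactly as you describe; and the induction for $K\circ L=\id$ via the identity $u_0\ot u'=u_0P_A(u')$ recorded before Theorem~\mref{thm:shua} is sound. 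The paper itself offers no argument here: it simply cites \cite[Corollary~2.6]{ZGK}, where the statement is deduced from a general comparison-functor criterion (Proposition~2.5(b) of that reference, the same result invoked later for Corollaries~\mref{co:Rcomonadic} and~\mref{co:drbmon}). So your route is genuinely different in character: it is a self-contained, hands-on construction of the inverse functor $L$, whose only nontrivial ingredient is the exchange identity $h\circ P_A=P_h\circ h$ extracted from the monad-multiplication axiom. What the citation-based approach buys is uniformity --- one abstract criterion disposes at once of the monadicity of $\RBA$ over $\ALG$, of $\DRB_\omega$ over $\DIF$, and (dually) of the comonadicity statements --- whereas your argument buys transparency and independence from the external reference, at the cost of not transferring automatically to the lifted settings of Section~\mref{sec:lift}.
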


Next, we recall also from~\mcite{ZGK} the comonad giving differential algebras.

Let $\varphi:A \rightarrow B$ be an algebra homomorphism. Then the map
\begin{equation}
\varphi^{\NN}: A^{\NN} \rightarrow B^{\NN},\quad \varphi^{\NN}_n(f) := \varphi(f_n) \quad\text{ for all }\  f\in A^{\NN}, n\in\NN,
\mlabel{eq:difhomo}
\end{equation}
is a differential algebra homomorphism from $(A^{\NN},\partial_A)$ to $(B^{\NN},\partial_B)$.

Let $\DIF$ denote the category
of differential algebras of weight $\lambda$. Let $V:\DIF \rightarrow \ALG$ denote the forgetful functor.
We also have a functor
$G:\ALG \rightarrow \DIF$ given on objects
$A$ in $\ALG$ by $G(A): = (A^{\NN}, \partial_A)$ and on morphisms
$\varphi:A \rightarrow B$ in $\ALG$ by $G(\varphi):=\varphi^{\NN}$ as defined in Eq.~(\mref{eq:difhomo}).

There is a natural
transformation $\eta: \id_{\DIF} \rightarrow G V$
by
\begin{equation}
\eta_{(R,d)}:(R,d) \rightarrow (G V)(R,d)=(R^{\NN}, \partial_R),\quad (\eta_{(R,d)}(x))_n:= d^n(x)
\mlabel{eq:eta1}
\end{equation}
for any $(R,d) \in \DIF$ and all $x\in R$, $n\in \NN$.
There is also a natural transformation
$\varepsilon: V G \rightarrow \id_{\ALG}$ by
\begin{equation}
\varepsilon_A: (V G)(A) = A^{\NN} \rightarrow A,\quad \varepsilon_A(f): = f_0\quad\text{for any}\  A\in \ALG, f \in A^{\NN}.
\mlabel{eq:vare1}
\end{equation}

As an equivalent statement of Proposition~\ref{prop:cofree}, we have
\begin{coro}
The functor $G:\ALG \rightarrow \DIF$ defined above is the right adjoint of the forgetful functor $V:\DIF \rightarrow \ALG$. In other words, there is an adjunction
$\langle V,G,\eta,\varepsilon\rangle:\DIF\rightharpoonup\ALG.$
\mlabel{coro:cofree}
\end{coro}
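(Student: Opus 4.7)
The plan is to derive this adjunction directly from the universal property of the cofree differential algebra established in Proposition~\mref{prop:cofree}. That proposition asserts precisely that, for every algebra $A$ and every differential algebra $(R,d)$, the assignment
$$\Phi_{(R,d),A}:\Hom_{\DIF}\big((R,d),(A^\NN,\partial_A)\big)\longrightarrow\Hom_{\ALG}(R,A),\quad \tilde{\varphi}\mapsto\varepsilon_A\,V(\tilde{\varphi})$$
is a bijection. By standard category-theoretic machinery, such a family of bijections, natural in both variables, is equivalent to the adjunction $\langle V,G,\eta,\varepsilon\rangle$ with counit $\varepsilon$ and unit $\eta$. Hence the corollary reduces to checking functoriality of $G$, naturality of $\eta$ and $\varepsilon$, and the two triangle identities.

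First I would verify that $G$ is a functor. For $\varphi:A\to B$, the map $\varphi^\NN$ is multiplicative because the Hurwitz product~(\mref{eq:hurprod}) is a $\bfk$-linear combination of products of individual coordinates, so $\varphi$ applied termwise respects it; it commutes with $\partial$ since both $\partial_B\varphi^\NN(f)$ and $\varphi^\NN\partial_A(f)$ have $n$th coordinate $\varphi(f_{n+1})$. The identities $G(\psi\varphi)=G(\psi)G(\varphi)$ and $G(\id_A)=\id_{G(A)}$ then follow coordinate-wise.

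Next I would check naturality. For $\varepsilon$ and $\varphi:A\to B$, both $\varphi\varepsilon_A$ and $\varepsilon_B\varphi^\NN$ send $f$ to $\varphi(f_0)$. For $\eta$ and a differential algebra homomorphism $\varphi:(R,d)\to(S,d_S)$, both $\varphi^\NN\eta_{(R,d)}$ and $\eta_{(S,d_S)}\varphi$ send $x\in R$ to the sequence with $n$th coordinate $\varphi(d^n(x))=d_S^n(\varphi(x))$, using that $\varphi$ intertwines the two differentials.

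Finally I would verify the triangle identities. For $(R,d)\in\DIF$ and $x\in R$, we have $(\varepsilon_R\eta_{(R,d)})(x)=(\eta_{(R,d)}(x))_0=d^0(x)=x$, which gives $\varepsilon V\circ V\eta=\id_V$. For $A\in\ALG$ and $f\in A^\NN$, the $n$th coordinate of $(G(\varepsilon_A)\circ\eta_{G(A)})(f)$ equals $\varepsilon_A(\partial_A^n(f))=(\partial_A^n(f))_0=f_n$, which gives $G\varepsilon\circ\eta G=\id_G$. Each step is a routine verification, and the only genuine content is the universal property already proved in Proposition~\mref{prop:cofree}, so I anticipate no serious obstacle.
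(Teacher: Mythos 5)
Your proposal is correct and follows essentially the same route as the paper, which presents this corollary as an equivalent restatement of the cofree universal property in Proposition~\ref{prop:cofree} via the standard correspondence between couniversal arrows and adjunctions; you simply make the routine unit--counit verifications explicit (and these match the computations the paper itself carries out for the enriched version in Proposition~\ref{pp:rightadjCt}). No gaps.
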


Corresponding to the adjunction $\langle V,G,\eta,\varepsilon\rangle:\DIF\rightharpoonup\ALG$,
there is a comonad $\C = \langle C,\varepsilon,\delta \rangle$ on $\ALG$, where $C$ is the functor
$C := VG:\ALG \rightarrow \ALG$
and $\delta$ is the natural transformation from $C$ to $C C$ defined by $\delta := V\eta G$.

The comonad $\C$ induces a category
of $\C$-coalgebras, denoted by $\ALG_{\C}$.  The objects in
$\ALG_{\C}$ are pairs $\langle A,f\rangle$, where $A$ is in $\ALG$ and
$f:A \rightarrow A^{\NN}$ is a homomorphism in $\ALG$ satisfying the properties
$$\varepsilon_A  f= \id_A, \quad \delta_A  f= f^{\NN}  f.$$
A morphism $\varphi:\langle A,f\rangle\rightarrow\langle B,g\rangle$ in $\ALG_{\C}$ is an algebra homomorphism $\varphi: A \rightarrow B$ such that $g  \varphi = \varphi^{\NN}  f$.

The comonad $\C$ also gives rise to an adjunction
$$\langle V_{\C}, G_{\C}, \eta_{\C}, \varepsilon_{\C} \rangle: \ALG_{\C}
\rightharpoonup \ALG,$$
where
$$V_{\C} : \ALG_{\C} \rightarrow \ALG$$ is given on objects $\langle R,f\rangle$ in $\ALG_{\C}$
by $V_{\C}\langle R,f\rangle = R$.
The functor
$$G_{\C}:\ALG \rightarrow \ALG_{\C}$$ is defined on objects
$A$ in $\ALG$ by $G_{\C}(A) = \langle A^{\NN}, \delta_A\rangle$.
The natural transformations
$\eta_{\C}$ and $\varepsilon_{\C}$ are defined similarly to $\eta$ and $\varepsilon$ in Eqs.~(\mref{eq:eta1}) and (\mref{eq:vare1}), respectively.
Consequently there is a uniquely defined cocomparison functor
$H:\DIF \rightarrow \ALG_{\C}$ such that $H  G = G_{\C}$ and $V_{\C} H = V$.

\begin{coro}$($\cite[Corollary~3.5]{ZGK}$)$
The cocomparison functor $H:\DIF \rightarrow \ALG_{\C}$ is
an isomorphism, i.e., $\DIF$ is comonadic over $\ALG$.
\mlabel{co:comonadic}
\end{coro}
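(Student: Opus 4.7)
The plan is to construct an explicit two-sided inverse $\bar{H}: \ALG_\C \to \DIF$ to the cocomparison functor $H$, thereby showing $H$ is an isomorphism of categories. Given a $\C$-coalgebra $\langle A, f \rangle$, where $f: A \to A^\NN$ is an algebra homomorphism satisfying $\varepsilon_A f = \id_A$ and $\delta_A f = f^\NN f$, I would define $d_f: A \to A$ by $d_f(x) := f(x)_1$ and set $\bar{H}\langle A, f \rangle := (A, d_f)$.

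The main work is showing $(A, d_f)\in\DIF$. First, the counit axiom $\varepsilon_A f=\id_A$ says $f(x)_0=x$. Second, recalling that $\delta_A(h)_n = \partial_A^n(h)$ so $\delta_A(h)_n(m)=h_{n+m}$, the comultiplication axiom $\delta_A f = f^\NN f$ unpacks to the identity $f(x)_{n+m}=f(f(x)_n)_m$. Setting $n=1$ and inducting on $m$, one derives $f(x)_n = d_f^n(x)$ for all $n\in\NN$. Next, from multiplicativity $f(xy)=f(x)f(y)$ evaluated at coordinate $n=1$ using the $\lambda$-Hurwitz product formula (Eq.~(\mref{eq:hurprod})), one obtains
\[
d_f(xy)=d_f(x)y+xd_f(y)+\lambda d_f(x)d_f(y),
\]
which is the weight-$\lambda$ Leibniz rule of Eq.~(\mref{eq:der1}). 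Finally, since $f$ is unital and the identity of $A^\NN$ is $(\bfone_A,0,0,\ldots)$ under the $\lambda$-Hurwitz product, $d_f(\bfone_A)=f(\bfone_A)_1=0$.

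For morphisms, if $\varphi:\langle A,f\rangle\to\langle B,g\rangle$ in $\ALG_\C$ satisfies $g\varphi=\varphi^\NN f$, then reading this at the first coordinate gives $d_g\varphi=\varphi d_f$, so $\varphi$ is a morphism in $\DIF$; set $\bar{H}(\varphi):=\varphi$. Checking $\bar{H}H=\id_{\DIF}$ is immediate: $H(R,d)=\langle R,\eta_{(R,d)}\rangle$ with $\eta_{(R,d)}(x)_1=d(x)$, so $d_{\eta_{(R,d)}}=d$. For $H\bar{H}=\id_{\ALG_\C}$, the computation $\eta_{(A,d_f)}(x)_n=d_f^n(x)=f(x)_n$ (using the inductive identity from the previous paragraph) gives $\eta_{(A,d_f)}=f$. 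Both composites act as the identity on morphisms by construction.

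The main obstacle, technically, is the translation of the two abstract coalgebra axioms into the componentwise formula $f(x)_n=d_f^n(x)$, together with the extraction of the weight-$\lambda$ Leibniz rule from multiplicativity of $f$ via the Hurwitz product expansion. These are routine but are the one place where the differential algebra structure must be recovered from purely categorical data; once they are in hand, functoriality and the inverse-pair verification are formal.
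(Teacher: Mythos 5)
Your proof is correct. The paper itself offers no argument for this corollary --- it is quoted verbatim from \cite[Corollary~3.5]{ZGK}, where it is obtained from a general comonadicity criterion (the dual of the comparison-functor criterion invoked elsewhere in this paper as \cite[Proposition~2.5(b)]{ZGK}). Your route is a direct, self-contained construction of the inverse $\bar{H}$, and every step checks out: the counit axiom gives $f(x)_0=x$; the coassociativity axiom, once unpacked via $\delta_A(h)_n(m)=h_{n+m}$ and $f^\NN(h)_n=f(h_n)$, gives $f(x)_{n+m}=f(f(x)_n)_m$, whence $f(x)_n=d_f^n(x)$ by induction; the degree-one coordinate of the $\lambda$-Hurwitz product correctly yields $d_f(x)y+xd_f(y)+\lambda d_f(x)d_f(y)$; and unitality gives $d_f(\bfone_A)=0$, which is needed since the paper's Eq.~(\mref{eq:der1}) includes that condition in the definition of a weight-$\lambda$ differential operator. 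The two composites $\bar{H}H$ and $H\bar{H}$ reduce to $d_{\eta_{(R,d)}}=d$ and $\eta_{(A,d_f)}=f$ exactly as you say, using the standard formula $H(R,d)=\langle R,\eta_{(R,d)}\rangle$ for the cocomparison functor. What the citation-based approach buys is brevity and uniformity (the same abstract criterion also handles Corollaries~\mref{co:rbmon}, \mref{co:Rcomonadic} and \mref{co:drbmon}); what your approach buys is an explicit description of $H^{-1}$, which is in fact the form in which the isomorphism is used later in the proof of Theorem~\mref{thm:main} (e.g.\ in passing from $\widetilde{H}^{-1}\langle(R^\NN,\widetilde{P}),\tilde{\delta}_{(R,P)}\rangle$ back to an object of $\DRB_\omega$). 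The only cosmetic caveat is that you take for granted the standard object-level formula for the cocomparison functor determined by $HG=G_\C$ and $V_\C H=V$; a one-line remark justifying $H(R,d)=\langle R,\eta_{(R,d)}\rangle$ would make the argument fully airtight.
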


\subsection{Lifting comonads on $\RBA$}
\mlabel{ss:colift}
In view of the categorical study, we rephrase Proposition~\mref{prop:extrb} in terms of categories of various operated algebras. See~\mcite{Gop,Ku} for related studies.

Recall from Eq.~(\mref{eq:T}) that
$$\calt:=xy+\bfk[x]+y\bfk[x] = \{
xy-(\phi(x) +y\psi(x))\,|\, \phi,\psi\in\bfk[x]\}.
$$
Let $\OA$ denote the category of operated algebras and, for a given $\omega\in\calt$, let $\ODA_\omega$ denote the category of type $\omega$ operated differential algebras of weight $\lambda$ in Definition~\mref{de:oda}. Thanks to Proposition~\mref{prop:extrb}, we obtain a functor
\begin{equation}
 G_\omega: \OA \to \ODA_\omega
 \mlabel{eq:coefun}
\end{equation}
given on objects
$(R, Q)$ in $\OA$ by $G_\omega(R, Q) = (R^{\NN}, \partial_R, \lqt)$ and on morphisms
$\varphi:(R, Q) \rightarrow (S, P)$ in $\OA$ by $(G_\omega(\varphi))_n(f) := \varphi(f_n)$ for any $f\in R^{\NN}$ and $n\in \NN$.

\begin{defn}
For a given $\omega\in\calt$ and $\lambda\in \bfk$, we say that the triple $(R, d, P)$ is a
{\bf type $\omega$ differential Rota-Baxter algebra of weight $\lambda$} if
\begin{enumerate}
\item $(R,d)$ is a differential algebra of weight $\lambda$,
\item $(R,P)$ is a Rota-Baxter algebra of weight $\lambda$, and
\item $\omega(d, P)=0$, that is,
   \begin{equation}
   dP=\phi(d)+P\psi(d).
   \mlabel{eq:dP1}
   \end{equation}
\end{enumerate}
\mlabel{de:DRBt}
\end{defn}

Next we will provide an example of a type $\omega$ differential Rota-Baxter algebra of weight $0$.

\begin{exam}
{\rm
Recall a well-known fact from analysis: Let $f(x,y)$ be a function with two variables $(x,y)\in \RR\times[0,+\infty)$ where $\RR$ is the field of real numbers. If $f(x,y)$ and $\frac{\partial}{\partial x} f(x,y)$ are continuous, then
\begin{equation}
\frac{\partial}{\partial x}\int_0^y f(x,t)dt=\int_{0}^y \frac{\partial}{\partial x} f(x,t)dt.
\mlabel{eq:dPcomm}
\end{equation}
Let $A$ be an $\RR$-algebra consisting of all two-variable functions $f(x,y)$ defined on $\RR\times[0,\infty)$ with the property that both $f(x,y)$ and $\frac{\partial}{\partial x} f(x,y)$ are continuous, together with the function product:
$$(f\cdot g)(x,y)=f(x,y)g(x,y)\quad\text{for any}\ f, g\in A.$$
Define two $\RR$-linear operators $q$ and $Q$ on $A$ by
$$q(f)=\frac{\partial}{\partial x} f(x,y), \quad Q(f)=\int_0^y f(x,t)dt.$$
We can check that $q$ is a differential operator of weight $0$ and $Q$ is a Rota-Baxter operator of weight $0$. Eq.~(\mref{eq:dPcomm}) implies that $q$ and $Q$ commute, $qQ=Qq$. Thus $(A,q,Q)$ is a type $\omega$ differential Rota-Baxter algebra of weight $0$ where $\omega=xy-yx$.
}
\end{exam}

The category of type $\omega$ differential Rota-Baxter algebras of weight $\lambda$ will be denoted by $\DRB_\omega$. Note that $\DRB_\omega$ is a subcategory of $\ODA_\omega$.

We make the following assumption for a given $\omega\in\calt$ for the rest of Section~\mref{ss:colift}:

\begin{assumption}
{\rm
For every Rota-Baxter algebra $(R,P)$ of weight $\lambda$, the coextension $\lpt$ of $P$ in Proposition~\ref{prop:extrb} also gives a Rota-Baxter algebra $(R^\NN, \lpt)$ of weight $\lambda$.
}
\mlabel{as:coext}
\end{assumption}

This assumption amounts to the requirement that the functor $G_\omega: \OA \to \ODA_\omega$ in Eq.~(\mref{eq:coefun}) restricts to a functor
$$G_\omega:\RBA \rightarrow \DRB_\omega.$$
Let $V_\omega:\ODA_\omega \rightarrow \OA$ denote the forgetful
functor by forgetting the differential structure. Then $V_\omega$ restricts to a functor
$$V_\omega:\DRB_\omega \rightarrow \RBA.$$

\begin{lemma}
Let $\omega\in\calt$ satisfy Assumption~\mref{as:coext}, and $(R, d, P)\in\DRB_\omega$. Then we have
\begin{equation}
\lpt\eta_{(R,d)}=\eta_{(R,d)} P,
\mlabel{eq:etalemma}
\end{equation}
where $\eta_{(R,d)}$ is given in Eq.~$($\mref{eq:eta1}$)$.
\mlabel{lem:lpeta}
\end{lemma}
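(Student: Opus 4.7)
The plan is to verify the identity component-wise. Since both sides are elements of $R^\NN$, it suffices to show that for every $x\in R$ and every $n\in\NN$,
\[
\lpt_n(\eta_{(R,d)}(x)) = (\eta_{(R,d)}(P(x)))_n = d^n(P(x)).
\]
For brevity write $\bar{x}:=\eta_{(R,d)}(x)$, so $\bar{x}_n=d^n(x)$. I will proceed by induction on $n$. The base case $n=0$ is immediate from the defining property of a coextension: $\lpt_0(\bar{x})=P(\bar{x}_0)=P(x)$.

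For the inductive step, the crucial preliminary observation is that $\eta_{(R,d)}$ is a differential algebra homomorphism (Eq.~(\mref{eq:eta1}) makes $\partial_R\eta_{(R,d)}=\eta_{(R,d)}d$). Consequently, for any polynomial $q\in\bfk[x]$, one has $q(\partial_R)\bar{x}=\overline{q(d)(x)}$, and in particular
\[
\phi(\partial_R)_n(\bar{x}) = d^n(\phi(d)(x)), \qquad \psi(\partial_R)(\bar{x}) = \overline{\psi(d)(x)}.
\]
Now apply the recursion (\mref{eq:lptn}) defining $\lpt$, followed by the induction hypothesis applied to the element $\psi(d)(x)\in R$:
\[
\lpt_{n+1}(\bar{x}) = \phi(\partial_R)_n(\bar{x}) + \lpt_n\bigl(\psi(\partial_R)(\bar{x})\bigr) = d^n(\phi(d)(x)) + d^n\bigl(P(\psi(d)(x))\bigr).
\]
Using the type $\omega$ differential Rota-Baxter relation $dP=\phi(d)+P\psi(d)$ from Eq.~(\mref{eq:dP1}), the right-hand side collapses to $d^n(dP(x))=d^{n+1}(P(x))$, completing the induction.

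No serious obstacle is expected: the argument is essentially the compatibility of $\eta_{(R,d)}$ with polynomial expressions in the two operators, combined with the uniqueness of the coextension (so one only needs to check the recursion is satisfied by $n\mapsto d^n(P(x))$). The one small point to be careful about is that the recursion (\mref{eq:lptn}) is read from $\omega(\partial_R,\lpt)=0$ with $\lpt$ on the right of $\psi(\partial_R)$, matching the analogous placement in the type $\omega$ identity $dP=\phi(d)+P\psi(d)$; this is exactly what makes the final identification in the inductive step go through. Assumption~\mref{as:coext} plays no role here — it is needed only to ensure $(R^\NN,\lpt)$ is itself Rota-Baxter, not for the intertwining of $\eta_{(R,d)}$ with $P$ and $\lpt$.
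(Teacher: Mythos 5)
Your proof is correct and follows essentially the same route as the paper: a componentwise induction on $n$ showing $\lpt_n\eta_{(R,d)}=d^nP$, using the recursion (\mref{eq:lptn}) together with the intertwining $\phi(\partial_R)_n\eta_{(R,d)}=d^n\phi(d)$, $\psi(\partial_R)\eta_{(R,d)}=\eta_{(R,d)}\psi(d)$ and the relation $dP=\phi(d)+P\psi(d)$. Your closing observation that Assumption~\mref{as:coext} is not actually used in the verification is accurate as well.
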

\begin{proof}
For all $n\in\NN$, we obtain $(\lpt\eta_{(R,d)})_n=\lpt_n\eta_{(R,d)}$ and $(\eta_{(R,d)} P)_n=d^n P$. Then
Eq.~(\mref{eq:etalemma}) is equivalent to
\begin{equation}
\lpt_n\eta_{(R,d)}=d^n P\quad\text{for all}\  n\in\NN.
\mlabel{eq:etacomlemma}
\end{equation}
We will prove by induction on $n\in\NN$ that Eq.~(\mref{eq:etacomlemma}) holds.

First for any $u\in R$,
\begin{equation*}
(\lpt_0\eta_{(R,d)})(u)=
P(\eta_{(R,d)}(u)_0)=P(u)=(d^0 P)(u).
\mlabel{eq:ass01}
\end{equation*}

Next assume that for a given $k\in \NN$,
\begin{equation}
\lpt_k\eta_{(R,d)}=d^k P
\mlabel{eq:assn1}
\end{equation}
holds. For every $v\in R$, we have
\begin{equation*}
((\partial_R^i)_\ell\eta_{(R,d)})(v)=d^{\ell+i}(v)=d^\ell(d^i(v))\quad \text{for all}\  i, \ell\in\NN.
\mlabel{eq:pari}
\end{equation*}
Then
\begin{equation}
\phi(\partial_R)_k\eta_{(R,d)}=d^k\phi(d), \quad \psi(\partial_R)\eta_{(R,d)}=\eta_{(R,d)}\psi(d).
\mlabel{eq:pari0}
\end{equation}
Since
\begin{eqnarray*}
(\lpt_{k+1}\eta_{(R,d)})(v)&=&\lpt_{k+1}(\eta_{(R,d)}(v))\\
&=&(\phi(\partial_R)_k+\lpt_k\psi(\partial_R))(\eta_{(R,d)}(v))
\quad \text{(by Eq.~(\ref{eq:lptn}))}\\
&=&(d^k\phi(d))(v)+
((\lpt_k\eta_{(R,d)})\psi(d))(v)\quad \text{(by Eq.~(\ref{eq:pari0}))}\\
&=&(d^k\phi(d))(v)+
((d^kP)\psi(d))(v)\quad \text{(by Eq.~(\ref{eq:assn1}))}
\end{eqnarray*}
and
\begin{eqnarray*}
(d^{k+1} P)(v)&=&(d^k(dP))(v)\\
&=&(d^k(\phi(d)+P\psi(d)))(v)\quad \text{(by Eq.~(\ref{eq:dP}))}\\
&=&(d^k\phi(d))(v)+
(d^k(P\psi(d)))(v),
\end{eqnarray*}
we obtain $(\lpt_{k+1}\eta_{(R,d)})(v)=(d^{k+1} P)(v).$
This completes the induction.
\end{proof}

\begin{prop}
Let $\omega\in\calt$ satisfy Assumption~\mref{as:coext}. The functor $G_\omega:\RBA \rightarrow \DRB_\omega$ is the right
adjoint of the forgetful functor $V_\omega:\DRB_\omega \rightarrow \RBA$. In other words, there is an adjunction $\langle V_\omega, G_\omega, \eta^\omega, \varepsilon^\omega \rangle:\DRB_\omega\rightharpoonup\RBA$.
\mlabel{pp:rightadjCt}
\end{prop}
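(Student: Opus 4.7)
The plan is to lift the adjunction $\langle V, G, \eta, \varepsilon\rangle: \DIF \rightharpoonup \ALG$ of Corollary \ref{coro:cofree} to an adjunction between the enriched categories $\DRB_\omega$ and $\RBA$. Under Assumption \ref{as:coext}, both $V_\omega$ and $G_\omega$ are bona fide functors between these categories and sit over $V$ and $G$ via the forgetful functors to $\DIF$ and $\ALG$. So the natural candidate for the unit $\eta^\omega: \id_{\DRB_\omega}\to G_\omega V_\omega$ is $\eta^\omega_{(R,d,P)} := \eta_{(R,d)}$, and the natural candidate for the counit $\varepsilon^\omega: V_\omega G_\omega\to \id_{\RBA}$ is $\varepsilon^\omega_{(R,P)} := \varepsilon_R$.

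The first step is to check that these candidate morphisms actually live in the enriched categories. For $\eta^\omega_{(R,d,P)}: (R,d,P) \to (R^\NN, \partial_R, \lpt)$, it is already a differential algebra homomorphism by Corollary \ref{coro:cofree}, so the only new requirement is compatibility with the Rota-Baxter operators, i.e.\ $\lpt\, \eta_{(R,d)} = \eta_{(R,d)}\, P$. This is precisely the content of Lemma \ref{lem:lpeta} and requires no further work. For $\varepsilon^\omega_{(R,P)}: (R^\NN, \lpt) \to (R, P)$, it is already an algebra homomorphism, so the only new requirement is $\varepsilon_R\, \lpt = P\, \varepsilon_R$; this follows in one line from the initial condition $\lpt_0(f)=P(f_0)$ in Eq.~(\ref{eq:lpt0}) together with $\varepsilon_R(f) = f_0$.

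Once these two checks are in hand, naturality of $\eta^\omega$ and $\varepsilon^\omega$ in $\DRB_\omega$ and $\RBA$ respectively is inherited from the naturality of $\eta$ and $\varepsilon$ in $\DIF$ and $\ALG$, since a morphism in $\DRB_\omega$ (resp.\ $\RBA$) is by definition a morphism in $\DIF$ (resp.\ $\ALG$) with extra commutation, and the diagrams defining naturality live entirely in the underlying categories. Likewise the triangle identities $(V_\omega \varepsilon^\omega)\circ(\eta^\omega V_\omega) = \id_{V_\omega}$ and $(\varepsilon^\omega G_\omega)\circ(G_\omega \eta^\omega) = \id_{G_\omega}$ reduce on underlying objects to the corresponding identities for $\langle V, G, \eta, \varepsilon\rangle$ and hold automatically.

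I expect the only genuinely nontrivial verification to be the compatibility $\lpt\,\eta_{(R,d)} = \eta_{(R,d)}\, P$, which is where the hypothesis $\omega(d,P)=0$ on $(R,d,P)$ and the defining recursion $\omega(\partial_R,\lpt)=0$ on $(R^\NN,\partial_R,\lpt)$ must be reconciled; but this is exactly what Lemma \ref{lem:lpeta} does by induction on $n$, using Eq.~(\ref{eq:lptn}) and Eq.~(\ref{eq:dP}) in tandem. With that lemma already proved, the proposition is essentially a bookkeeping exercise: assemble the data, invoke the lemma and the initial condition for the two compatibilities, and transport naturality and the triangle identities through the forgetful functors.
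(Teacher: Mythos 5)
Your proposal is correct and follows essentially the same route as the paper: the paper uses exactly the same unit $\eta^\omega_{(R,d,P)}=\eta_{(R,d)}$ and counit $\varepsilon^\omega_{(A,P)}=\varepsilon_A$, invokes Lemma~\ref{lem:lpeta} for the Rota--Baxter compatibility of the unit and the initial condition $\lpt_0(f)=P(f_0)$ for that of the counit, and then verifies naturality and the triangle identities by direct computation, which (as you observe) reduce to the corresponding facts for the underlying adjunction $\langle V,G,\eta,\varepsilon\rangle:\DIF\rightharpoonup\ALG$.
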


\begin{proof}
By~\cite{Ma}, it is equivalent to show that there are two natural
transformations
$$\varepsilon^\omega: V_\omega G_\omega \rightarrow \id_{\RBA},\quad \eta^\omega: \id_{\DRB_\omega} \rightarrow G_\omega V_\omega$$
that
satisfy the equations
$$G_\omega\varepsilon^\omega  \circ\eta^\omega G_\omega = G_\omega, \quad \varepsilon^\omega V_\omega \circ V_\omega\eta^\omega = V_\omega.$$

For any $(A,P) \in \RBA$, define
\begin{equation*}
\varepsilon^\omega_{(A,P)}: (V_\omega G_\omega)(A,P) = (A^{\NN},\lpt) \rightarrow (A,P), \quad \varepsilon^\omega_{(A,P)}(f): = f_0 = \varepsilon_A(f) \quad \text{ for all }\ f \in A^{\NN}.
\mlabel{eq:vare2t}
\end{equation*}
By Proposition~\mref{prop:cofree}, $\varepsilon^\omega_{(A,P)}$ is an algebra homomorphism.
By
$$\varepsilon^\omega_{(A, P)}(\lpt(f))=P(f_0)=P(\varepsilon^\omega_{(A, P)}(f)),$$
we find that $\varepsilon^\omega_{(A ,P)}$ is a homomorphism of Rota-Baxter algebras.
If $\varphi:(A,P) \rightarrow (A',P')$ is any homomorphism of Rota-Baxter algebras, then
$$\varepsilon^\omega_{(A',P')}  (V_\omega G_\omega)(\varphi) = \varphi  \varepsilon^\omega_{(A,P)},$$
that is, $\varepsilon^\omega$
is a natural transformation.

For any $(R,d,P) \in \DRB_\omega$, define $\eta^\omega_{(R,d,P)}:(R,d,P) \rightarrow (R^{\NN}, \partial_R,\lpt)$ by
\begin{equation*}
(\eta^\omega_{(R,d,P)}(x))_n := d^{n}(x) = (\eta_{(R,d)}(x))_n
\mlabel{eq:eta3t}
\end{equation*}
for all $x \in R$ and $n \in \NN$.
Then $\eta^\omega_{(R,d,P)}$ is an algebra homomorphism.
Also we obtain
$$ \partial_R  \eta^\omega_{(R,d,P)} = \eta^\omega_{(R,d,P)}  d $$
and $\lpt\eta^\omega_{(R,d,P)}=\eta^\omega_{(R,d,P)} P$ follows from Lemma~\mref{lem:lpeta} and $\eta^\omega_{(R,d,P)}=\eta_{(R,d)}$.
Then $\eta^\omega_{(R,d,P)}$ is a morphism in $\DRB_\omega$.
Furthermore, if $\varphi:(R, d, P) \rightarrow (R',d',P')$ is a morphism in $\DRB_\omega$,
then one sees that $\eta^\omega_{(R',d',P')}  \varphi = (G_\omega V_\omega)(\varphi)  \eta^\omega_{(R,d,P)}.$
Hence $\eta^\omega$ is a natural transformation.

To check that $G_\omega\varepsilon^\omega \circ \eta^\omega G_\omega = G_\omega$, let $(A,P)\in \RBA$,
$f \in A^{\NN}$ and $n \in \NN$.  Then
\begin{eqnarray*}
(G_\omega\varepsilon^\omega_{(A,P)}(\eta^\omega_{(A^{\NN},\partial_A,\lpt)}(f)))_n &=&
\varepsilon^\omega_{(A,P)}(\eta^\omega_{(A^{\NN},\partial_A,\lpt)}(f)_n) =
\varepsilon^\omega_{(A,P)}(\partial_{A}^{n}(f))
=(\partial_{A}^{n}(f))_0 =
f_{0+n} = f_n.
\end{eqnarray*}
Similarly, to check that $\varepsilon^\omega V_\omega \circ V_\omega\eta^\omega = V_\omega$,
let $(R,d,P) \in\DRB_\omega$ and $x \in R$.  Then
$$\varepsilon^\omega_{(R,P)}(\eta^\omega_{(R,d,P)}(x)) =
(\eta^\omega_{(R,d,P)}(x))_0 = d^{0}(x) = x,$$
as desired.
\end{proof}

The adjunction $\langle V_\omega, G_\omega, \eta^\omega, \varepsilon^\omega \rangle:\DRB_\omega\rightharpoonup\RBA$ gives a comonad
$\C_\omega = \langle C_\omega,\varepsilon^\omega,\delta^\omega \rangle$
 on the category
$\RBA$, where
$C_\omega:\RBA \rightarrow \RBA$
is the functor
whose value for any $(R,P) \in \RBA$ is $C_\omega(R,P) = (R^{\NN}, \lpt)$ and $\delta^\omega:C_\omega\rar C_\omega C_\omega$ is defined by $\delta^\omega := V_\omega\eta^\omega G_\omega$.
In other words, for any $(R,P) \in \RBA$,
$$\delta^\omega_{(R,P)}:(R^\NN,\lpt) \rightarrow ((R^{\NN})^{\NN}, \widehat{\lpt}^\omega),\quad \delta^\omega_{(R,P)}(f)=\delta_R(f)\quad\text{for all}\  f \in R^{\NN}.$$
Consequently, the comonad $\C_\omega$ is a lifting of $\C$ on $\RBA$.

Similarly, there is a category
of $\C_\omega$-coalgebras, denoted by $\RBA_{\C_\omega}$.
The comonad $\C_\omega$ also induces an adjunction
\begin{equation*}
\langle (V_\omega)_{\C_\omega}, (G_\omega)_{\C_\omega}, \eta^\omega_{\C_\omega}, \varepsilon^\omega_{\C_\omega} \rangle: \RBA_{\C_\omega}
\rightharpoonup \RBA,
\mlabel{eq:adjc'}
\end{equation*}
where
$(V_\omega)_{\C_\omega} : \RBA_{\C_\omega} \rightarrow \RBA$ is given on objects $\langle (A,P),f\rangle$ in $\RBA_{\C_\omega}$
by $$(V_\omega)_{\C_\omega}\langle (A,P),f\rangle = (A,P)$$ and on morphisms
$\varphi:\langle (A,P),f\rangle\rightarrow\langle (B,Q),g\rangle$ in $\RBA_{\C_\omega}$ by $(V_\omega)_{\C_\omega}(\varphi)=\varphi$.  The functor
$(G_\omega)_{\C_\omega}:\RBA \rightarrow \RBA_{\C_\omega}$ is defined on objects
$(A,P)$ in $\RBA$ by $$(G_\omega)_{\C_\omega}(A,P) = \langle(A^{\NN}, \lpt),\delta^\omega_{(A,P)}\rangle$$ and on morphisms
$\phi:(A,P)\rightarrow (B,Q)$ in $\RBA$ by $(G_\omega)_{\C_\omega}(\phi)= \phi^{\NN}$.  The natural transformations $\varepsilon^\omega_{\C_\omega}$ and
$\eta^\omega_{\C_\omega}$ are defined similarly to $\varepsilon^\omega$ and $\eta^\omega$, respectively.

Then there is a uniquely defined cocomparison functor
$H_\omega:\DRB_\omega \rightarrow \RBA_{\C_\omega}$ such that $H_\omega G_\omega = (G_\omega)_{\C_\omega}$ and $(V_\omega)_{\C_\omega} H_\omega = V_\omega$.

Applying the dual of \cite[Proposition 2.5(b)]{ZGK}, we obtain
\begin{coro}
Suppose that $\omega\in\calt$ satisfy Assumption~\mref{as:coext}. Then the cocomparison functor $H_\omega:\DRB_\omega \rightarrow \RBA_{\C_\omega}$ is
an isomorphism, i.e., $\DRB_\omega$ is comonadic over $\RBA$.
\mlabel{co:Rcomonadic}
\end{coro}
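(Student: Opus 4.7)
The plan is to apply the dual of \cite[Proposition~2.5(b)]{ZGK}, which reduces the comonadicity of $\DRB_\omega$ over $\RBA$ to verifying that the cocomparison functor $H_\omega$ is invertible. Since $V_\omega\eta^\omega_{(R,d,P)}=\eta_{(R,d)}$, this functor acts as $H_\omega(R,d,P) = \langle (R,P),\eta_{(R,d)}\rangle$, so the task is to produce a functor $\RBA_{\C_\omega}\to\DRB_\omega$ that is a two-sided inverse of $H_\omega$.

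First I construct the candidate inverse. Given $\langle (R,P),f\rangle \in \RBA_{\C_\omega}$, forgetting the Rota-Baxter structure yields a $\C$-coalgebra $\langle R,f\rangle$ in $\ALG_{\C}$. By the comonadicity of $\DIF$ over $\ALG$ (Corollary~\mref{co:comonadic}), there is a unique differential operator $d$ on $R$ such that $f=\eta_{(R,d)}$, equivalently $f(x)_n=d^n(x)$ for all $x\in R$ and $n\in\NN$. Define the candidate inverse by $\langle (R,P),f\rangle \mapsto (R,d,P)$; functoriality on morphisms is inherited from the $\DIF\simeq\ALG_\C$ correspondence, since every morphism in $\RBA_{\C_\omega}$ is in particular a morphism in $\ALG_\C$ that commutes with the Rota-Baxter structure.

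The main step is to check that $(R,d,P)\in \DRB_\omega$, i.e., that $\omega(d,P)=0$. Since $f$ is a morphism of $\C_\omega$-coalgebras, it intertwines the coextensions: $\lpt\circ f = f\circ P$ as maps $R\to R^\NN$. Reading off the $n=1$ component at an arbitrary $x\in R$, the right-hand side equals $f(P(x))_1 = d(P(x))$. For the left-hand side, the recursion~(\mref{eq:lptn}) gives
$$\lpt_1(f(x)) = \bigl(\phi(\partial_R)_0 + \lpt_0\,\psi(\partial_R)\bigr)(f(x)),$$
which simplifies to $\phi(d)(x) + P(\psi(d)(x))$ upon applying the identities $\phi(\partial_R)_0\eta_{(R,d)}=\phi(d)$ and $\psi(\partial_R)\eta_{(R,d)}=\eta_{(R,d)}\psi(d)$ from~(\mref{eq:pari0}), together with the initial condition $\lpt_0(g)=P(g_0)$ from~(\mref{eq:lpt0}). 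Equating the two sides yields $dP=\phi(d)+P\psi(d)$, which is exactly~(\mref{eq:dP1}).

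Finally, the two composites $H_\omega\circ H_\omega^{-1}$ and $H_\omega^{-1}\circ H_\omega$ act as identities: on the underlying differential/coalgebra level this is the content of Corollary~\mref{co:comonadic}, while the Rota-Baxter operator is transported untouched in both directions. The principal obstacle is the compatibility computation at $n=1$ above; once that identity is in hand, every remaining verification is either formal or reduces to the already established comonadicity of $\DIF$ over $\ALG$.
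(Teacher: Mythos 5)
Your proof is correct, but it takes a genuinely different route from the paper. The paper disposes of this corollary in one line, by invoking the dual of \cite[Proposition~2.5(b)]{ZGK} (a ready-made criterion for the (co)comparison functor of these free/cofree adjunctions to be an isomorphism); no inverse functor is constructed on the page. You instead build $H_\omega^{-1}$ explicitly: you strip a $\C_\omega$-coalgebra $\langle (R,P),f\rangle$ down to its underlying $\C$-coalgebra, use the already-established comonadicity of $\DIF$ over $\ALG$ (Corollary~\mref{co:comonadic}) to recover the unique $d$ with $f=\eta_{(R,d)}$, and then extract the relation $dP=\phi(d)+P\psi(d)$ from the single fact that the structure map $f$ is a morphism in $\RBA$, i.e.\ $\lpt f=fP$, read off at the $n=1$ component via the recursion~(\mref{eq:lptn}) and the identities~(\mref{eq:pari0}). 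That computation is sound (note that~(\mref{eq:pari0}) only uses that $(R,d)$ is a differential algebra, so it is legitimately available before the $\omega$-compatibility is known), and the two composites are identities by the uniqueness part of Corollary~\mref{co:comonadic}. What your approach buys is self-containedness and transparency --- it makes visible exactly where the constraint $\omega(d,P)=0$ is recovered from the coalgebra data, and it closely parallels the argument the paper itself uses later for the implication (v)~$\Rightarrow$~(i) in Theorem~\mref{thm:main} --- at the cost of redoing by hand what the cited general proposition packages once and for all (and reuses for the dual statement, Corollary~\mref{co:drbmon}).
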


\subsection{Lifting monads on $\DIF$}
\mlabel{ss:lift}
We let $\OA_0$ denote the category of operated algebras $(R, q)$ with the property $q(\bfone_R)=0$, and let $\ORB_\omega$ denote the category of type $\omega$ operated Rota-Baxter algebras of weight $\lambda$.
Thanks to Proposition~\mref{prop:extd}, we obtain a functor
\begin{equation}
 F_\omega: \OA_0 \to \ORB_\omega
 \mlabel{eq:extfun}
\end{equation}
given on
objects $(R,q)$ in $\OA_0$ by $F_\omega(R,q) = (\sha(R),\lqst,P_R)$ and on morphisms
$\varphi:(R,q) \rar (S,d)$ in $\OA_0$ by $$F_\omega(\varphi)(\sum^{k}_{i=1}a_{i0}\otimes a_{i1}\otimes\cdots \otimes a_{in_{i}})=\sum^{k}_{i=1}\varphi(a_{i0})\otimes \varphi(a_{i1})\otimes\cdots \otimes \varphi(a_{in_{i}}),\quad
\sum^{k}_{i=1}a_{i0}\otimes a_{i1}\otimes\cdots \otimes a_{in_{i}}\in\sha(R).$$

The category $\DRB_\omega$, which is a subcategory of $\ODA_\omega$, is also a subcategory of $\ORB_\omega$.
We consider the following condition for a given $\omega\in\calt$ in Section~\mref{ss:lift}:
\begin{assumption}
{\rm
For every differential algebra $(R,d)$ of weight $\lambda$, the extension $\ldt$ of $d$ to $\sha(R)$ in Proposition~\ref{prop:extd} is a differential operator of weight $\lambda$.
\mlabel{as:ext}
}
\end{assumption}
This condition amounts to assuming that the functor $F_\omega: \OA_0 \to \ORB_\omega$ in Eq.~(\mref{eq:extfun}) restricts to a functor
$$F_\omega:\DIF \rar \DRB_\omega.$$

Let $U_\omega:\ORB_\omega \rightarrow \OA_0$ be the forgetful
functor by forgetting the Rota-Baxter algebra structure. Then $U_\omega$ restricts to a functor
$$U_\omega:\DRB_\omega \rar \DIF.$$

\begin{lemma}
Let $\omega\in\calt$ satisfy Assumption~\mref{as:ext} and $(R, d, P)\in\DRB_\omega$. Then we have
\begin{equation*}
\varepsilon_{(R, P)}\ldt=d\varepsilon_{(R, P)},
\mlabel{eq:varelemma}
\end{equation*}
where $\varepsilon_{(R, P)}$ is given in Eq.~$($\mref{eq:vare0}$)$.
\mlabel{lem:edcomm}
\end{lemma}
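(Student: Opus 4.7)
The plan is to prove $\varepsilon_{(R,P)} \ldt = d\, \varepsilon_{(R,P)}$ by induction on the tensor degree. Since $\sha(R) = \bigcup_{n\geq 1}\bigoplus_{i=1}^n R^{\ot i}$, it suffices to establish the identity on each filtration stage $\bigoplus_{i=1}^n R^{\ot i}$. The base case $n=1$ is immediate: for $u_0\in R$ we have $\ldt(u_0)=d(u_0)$ because $\ldt$ extends $d$ by Proposition~\mref{prop:extd}, while $\varepsilon_{(R,P)}$ restricts to $\id_R$, so both sides equal $d(u_0)$.

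For the inductive step I will exploit two structural facts established earlier in the paper: (a) the identity $u_0\ot u' = u_0\cdot P_R(u')$ noted immediately after the construction of $P_A$ in Section~\mref{ss:trb}, which recursively reduces the length of tensors; and (b) the fact from Theorem~\mref{thm:shua} that $\varepsilon_{(R,P)}$ is a Rota-Baxter algebra homomorphism, so $\varepsilon_{(R,P)}P_R = P\,\varepsilon_{(R,P)}$. Writing $u = u_0\ot u'$ with $u'\in R^{\ot n}$, one obtains $\varepsilon_{(R,P)}(u) = u_0 P(\varepsilon_{(R,P)}(u'))$. Applying the weight-$\lambda$ Leibniz rule and then substituting the constraint $dP=\phi(d)+P\psi(d)$ from Eq.~(\mref{eq:dP1}) gives
\[
d(\varepsilon_{(R,P)}(u)) = d(u_0)P(\varepsilon_{(R,P)}(u')) + (u_0+\lambda d(u_0))\bigl(\phi(d)(\varepsilon_{(R,P)}(u')) + P\psi(d)(\varepsilon_{(R,P)}(u'))\bigr).
\]
On the other hand, applying $\varepsilon_{(R,P)}$ to the defining formula~(\mref{eq:1leibdform}) for $\ldt(u)$ and using that $\varepsilon_{(R,P)}$ is an algebra homomorphism produces
\[
\varepsilon_{(R,P)}(\ldt(u)) = d(u_0)P(\varepsilon_{(R,P)}(u')) + (u_0+\lambda d(u_0))\bigl(\varepsilon_{(R,P)}(\phi(\ldt)(u')) + P(\varepsilon_{(R,P)}(\psi(\ldt)(u')))\bigr).
\]
Matching the two expressions reduces the task to proving $\varepsilon_{(R,P)}\circ\phi(\ldt) = \phi(d)\circ\varepsilon_{(R,P)}$ and the analogous identity for $\psi$ on $R^{\ot n}$.

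The main obstacle is precisely this passage from the single operator $d$ to arbitrary polynomials $\phi(d),\psi(d)$. To handle it, I will strengthen the induction hypothesis to the assertion that $\varepsilon_{(R,P)}\circ f(\ldt) = f(d)\circ \varepsilon_{(R,P)}$ on $\bigoplus_{i=1}^n R^{\ot i}$ for every $f\in\bfk[x]$. Here the filtration-preservation property~(\mref{eq:qcon}) of $\ldt$ is essential: iterated applications $\ldt^k(u')$ remain inside $\bigoplus_{i=1}^n R^{\ot i}$, so a secondary induction on the degree of $f$ — peeling off one $\ldt$ at a time and invoking the primary hypothesis — yields the polynomial identity. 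Substituting this back into the two displayed expressions produces $\varepsilon_{(R,P)}(\ldt(u)) = d(\varepsilon_{(R,P)}(u))$, completing the inductive step.
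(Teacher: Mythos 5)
Your proposal is correct and follows essentially the same route as the paper's proof: induction on the filtration $\bigoplus_{i=1}^n R^{\ot i}$, the decomposition $u=u_0\ot u'=u_0P_R(u')$, applying $\varepsilon_{(R,P)}$ to Eq.~(\mref{eq:1leibdform}) on one side and the weight-$\lambda$ Leibniz rule plus $dP=\phi(d)+P\psi(d)$ on the other, and reducing to the commutation of $\varepsilon_{(R,P)}$ with $\phi(\ldt)$ and $\psi(\ldt)$ via Eq.~(\mref{eq:qcon}). Your explicit secondary induction on the degree of the polynomial is exactly the step the paper leaves implicit when it asserts Eq.~(\mref{eq:assn21}) from the inductive hypothesis together with Eq.~(\mref{eq:qcon}).
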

\begin{proof}
We will prove by induction on $n\in\NN_+$ that
$$\varepsilon_{(R, P)}\ldt|_{\oplus_{i=1}^nR^{\ot i}}=d\varepsilon_{(R, P)}|_{\oplus_{i=1}^nR^{\ot i}}$$
holds.

First when $n=1$,
\begin{equation*}
\varepsilon_{(R, P)}\ldt|_R=d=d\varepsilon_{(R, P)}|_R.
\mlabel{eq:ass02}
\end{equation*}

Next we assume that for a given $k\in \NN_+$,
\begin{equation*}
\varepsilon_{(R, P)}\ldt|_{\oplus_{i=1}^kR^{\ot i}}=d\varepsilon_{(R, P)}|_{\oplus_{i=1}^kR^{\ot i}}
\mlabel{eq:assn2}
\end{equation*}
holds. Together with Eq.~(\ref{eq:qcon}), we obtain
\begin{equation}
\varepsilon_{(R, P)}\phi(\ldt)|_{\oplus_{i=1}^kR^{\ot i}}=\phi(d)\varepsilon_{(R, P)}|_{\oplus_{i=1}^kR^{\ot i}},\quad
\varepsilon_{(R, P)}\psi(\ldt)|_{\oplus_{i=1}^kR^{\ot i}}=\psi(d)\varepsilon_{(R, P)}|_{\oplus_{i=1}^kR^{\ot i}}.
\mlabel{eq:assn21}
\end{equation}
Let $v=v_0\ot v'\in R^{\ot (k+1)}$ with $v'\in R^{\ot k}$.
Since
\begin{eqnarray*}
&&\varepsilon_{(R, P)}(\ldt(v))\\
&=&\varepsilon_{(R, P)}(d(v_0)P_R(v')+(v_0+\lambda d(v_0))(\phi(\ldt)+P_R\psi(\ldt))(v'))\quad \text{(by Eq.~(\ref{eq:1leibdform}))}\\
&=&d(v_0)P(\varepsilon_{(R, P)}(v'))+(v_0+\lambda d(v_0))(\varepsilon_{(R, P)}\phi(\ldt))(v')\\
&&+(v_0+\lambda d(v_0))(P(\varepsilon_{(R, P)}\psi(\ldt)))(v')\quad \text{(by Eq.~(\ref{eq:vare0}))}\\
&=&d(v_0)P(\varepsilon_{(R, P)}(v'))+(v_0+\lambda d(v_0))(\phi(d)\varepsilon_{(R, P)})(v')\\
&&+(v_0+\lambda d(v_0))(P(\psi(d)\varepsilon_{(R, P)}))(v')\quad \text{(by Eq.~(\ref{eq:assn21}))}
\end{eqnarray*}
and
\begin{eqnarray*}
&&d(\varepsilon_{(R, P)}(v))\\
&=&d(v_0P(\varepsilon_{(R, P)}(v')))\quad \text{(by Eq.~(\ref{eq:vare0}))}\\
&=&d(v_0)P(\varepsilon_{(R, P)}(v'))+(v_0+\lambda d(v_0))(d P)(\varepsilon_{(R, P)}(v'))\quad \text{(by Eq.~(\ref{eq:der1})) }\\
&=&d(v_0)P(\varepsilon_{(R, P)}(v'))+(v_0+\lambda d(v_0))(\phi(d)+P\psi(d))(\varepsilon_{(R, P)}(v'))\quad \text{(by Eq.~(\ref{eq:dP}))}\\
&=&d(v_0)P(\varepsilon_{(R, P)}(v'))+(v_0+\lambda d(v_0))(\phi(d)\varepsilon_{(R, P)})(v')+(v_0+\lambda d(v_0))(P(\psi(d)\varepsilon_{(R, P)}))(v'),
\end{eqnarray*}
we obtain $(\varepsilon_{(R, P)}\ldt)(v)=(d\varepsilon_{(R, P)})(v)$. This completes the induction.
\end{proof}

\begin{prop}
Let $\omega \in\calt$ satisfy Assumption~\mref{as:ext}. The functor $F_\omega:\DIF \rightarrow \DRB_\omega$ is the left adjoint of the forgetful functor
$U_\omega:\DRB_\omega \rightarrow \DIF$. In other words, there is an adjunction $\langle F_\omega, U_\omega, \eta^\omega, \varepsilon^\omega \rangle:\DIF\rightharpoonup \DRB_\omega$.
\mlabel{pp:leftadjCt}
\end{prop}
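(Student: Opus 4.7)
The plan is to imitate the proof of Proposition~\mref{pp:rightadjCt}, with the roles of unit and counit swapped (monad side instead of comonad side). By \cite{Ma}, it suffices to exhibit natural transformations $\varepsilon^\omega: F_\omega U_\omega \to \id_{\DRB_\omega}$ and $\eta^\omega: \id_{\DIF} \to U_\omega F_\omega$ that satisfy the triangle identities $\varepsilon^\omega F_\omega \circ F_\omega \eta^\omega = F_\omega$ and $U_\omega \varepsilon^\omega \circ \eta^\omega U_\omega = U_\omega$.

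For the unit, define $\eta^\omega_{(R,d)}: (R,d) \to (\sha(R), \ldt) = (U_\omega F_\omega)(R,d)$ to be the natural embedding $j_R$, viewed now as a map of differential algebras. It is already an algebra homomorphism by Theorem~\mref{thm:shua}; to see that it commutes with the differentials one only needs $\ldt \circ j_R = j_R \circ d$, which is immediate from the extension property $\ldt|_R = d$ built into the construction of $\ldt$ (Proposition~\mref{prop:extd}, together with Assumption~\mref{as:ext} which guarantees that $\ldt$ actually lands in $\DIF$). Naturality in $(R,d)$ is inherited from naturality of $j_R$ on $\ALG$.

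For the counit, define $\varepsilon^\omega_{(R,d,P)}: (\sha(R), \ldt, P_R) \to (R,d,P)$ by the same formula as in Eq.~(\mref{eq:vare0}). This is already an algebra homomorphism and a Rota-Baxter algebra homomorphism by the results of Section~3.1 (yielding the component $\varepsilon_{(R,P)}$ of the $F \dashv U$ adjunction on $\RBA \rightharpoonup \ALG$). What remains is to verify that it commutes with the differential operators, i.e., $\varepsilon_{(R,P)} \circ \ldt = d \circ \varepsilon_{(R,P)}$; this is precisely the content of Lemma~\mref{lem:edcomm}. Naturality of $\varepsilon^\omega$ in $(R,d,P)$ follows from naturality of $\varepsilon$ together with the fact that a morphism in $\DRB_\omega$ is in particular a morphism of Rota-Baxter algebras.

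Finally, the two triangle identities reduce to the corresponding identities already established for the adjunction $\langle F, U, \eta, \varepsilon \rangle:\ALG \rightharpoonup \RBA$ of Corollary~\mref{co:adjrba}, because $U_\omega$ and $F_\omega$ agree with $U$ and $F$ at the level of underlying algebra homomorphisms and the added structure maps ($\ldt$, $\partial_R$, $P_R$, $d$) are handled by the compatibility statements of the previous paragraph. The main conceptual obstacle has therefore already been absorbed into Lemma~\mref{lem:edcomm}: without it, the natural candidate $\varepsilon^\omega$ would not be a morphism in $\DRB_\omega$ at all, so no adjunction could exist; everything else is essentially bookkeeping against the monad structure of Section~3.1.
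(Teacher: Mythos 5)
Your proposal is correct and follows essentially the same route as the paper: unit given by the natural embedding (differential-compatible because $\ldt|_R=d$), counit given by the formula of Eq.~(\mref{eq:vare0}) (differential-compatible precisely by Lemma~\mref{lem:edcomm}), with naturality and the triangle identities inherited from the underlying adjunction of Corollary~\mref{co:adjrba}. The paper verifies the two triangle identities by direct elementwise computation rather than by citing the base adjunction, but this is a cosmetic difference only.
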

\begin{proof}
By~\cite{Ma}, we just need to define natural
transformations
$$\eta^\omega: \id_{\DIF} \rightarrow U_\omega F_\omega,\quad \varepsilon^\omega: F_\omega U_\omega \rightarrow \id_{\DRB_\omega}$$
that satisfy the equations
$U_\omega\varepsilon^\omega \circ \eta^\omega U_\omega = U_\omega$ and $\varepsilon^\omega F_\omega \circ F_\omega\eta^\omega = F_\omega$.

For any $(R, d) \in \DIF$, we define
\begin{equation*}
\eta^\omega_{(R, d)}:(R, d)\rightarrow (\sha(R),\ldt)
\mlabel{eq:eta2t}
\end{equation*}
to be the natural embedding map. Then $\eta^\omega_{(R, d)}$ is an algebra homomorphism.
Since
$$(\eta^\omega_{(R, d)} d)(r)=d(r)=(\ldt\eta^\omega_{(R, d)})(r)\quad \text{for all}\  r\in R,$$
$\eta^\omega_{(R, d)}$ is a morphism in $\DIF$.
As $\eta^\omega_{(R, d)}$ is a natural embedding, $\eta^\omega$ is a natural transformation.

For any $(R,d,P) \in \DRB_\omega$, define $\varepsilon^\omega_{(R,d,P)}: (\sha(R),\ldt,P_R) \rightarrow (R,d,P)$ by
\begin{equation*}
\varepsilon^\omega_{(R,d,P)}\Big(\sum^{k}_{i=1}a_{i0}\otimes a_{i1}\otimes\cdots \otimes a_{in_{i}}\Big) :=
\sum^{k}_{i=1}a_{i0}P(a_{i1}P(\cdots P(a_{in_{i}})\cdots))
\mlabel{eq:vare3t}
\end{equation*}
for any $\sum\limits^{k}_{i=1}a_{i0}\otimes a_{i1}\otimes\cdots \otimes a_{in_{i}} \in \sha(R).$
One can check that $\varepsilon^\omega_{(R,d,P)}$ is a Rota-Baxter algebra homomorphism. Further,
$\varepsilon^\omega_{(R, d, P)}\ldt=d\varepsilon^\omega_{(R, d, P)}$ by Lemma~\mref{lem:edcomm} and $\varepsilon^\omega_{(R, d, P)}=\varepsilon_{(R, P)}$.
Therefore,
$\varepsilon^\omega_{(R, d, P)}$ is a morphism in $\DRB_\omega$.
If
$\varphi:(R,d,P) \rightarrow (R',d',P')$ is any morphism in $\DRB_\omega$, then $\varepsilon^\omega_{(R',d',P')}  (F_\omega U_\omega)(\varphi) = \varphi \varepsilon^\omega_{(R,d,P)}$, that is, $\varepsilon^\omega$
is a natural transformation.

To see that $U_\omega\varepsilon^\omega \circ \eta^\omega U_\omega= U_\omega$, let $(R, D, P)\in \DRB_\omega$ and $r \in R$.  Then
$$\varepsilon^\omega_{(R, D, P)}(\eta^\omega_{(R, D)}(r))=\varepsilon^\omega_{(R, D, P)}(r)=r.$$
Similarly, to see that $\varepsilon^\omega F_\omega\circ F_\omega\eta^\omega = F_\omega$,
let $(A, d) \in \DIF$ and $a_0\ot a_1\ot\cdots\ot a_n \in \sha(A)$.  Then
$$\varepsilon^\omega_{(\ssha(A),\ldt,P_A)}((F_\omega\eta^\omega_{(A, d)})(a_0\ot a_1\ot\cdots\ot a_n))=\varepsilon^\omega_{(\ssha(A),\ldt,P_A)}
(a_0\otimes a_1\otimes\cdots\otimes a_n)=a_0\ot a_1\ot\cdots\ot a_n,$$
as needed.
\end{proof}

The adjunction $\langle F_\omega, U_\omega, \eta^\omega, \varepsilon^\omega \rangle:\DIF\rightharpoonup \DRB_\omega$ gives rise to a monad $\T_\omega=\langle T_\omega,\eta^\omega,\mu^\omega \rangle$ on $\DIF$,  where
$T_\omega = U_\omega F_\omega:\DIF \rightarrow \DIF$
is a functor and $\mu^\omega:=U_\omega\varepsilon^\omega F_\omega: T_\omega T_\omega\rightarrow T_\omega$. Note that the monad $\T_\omega$ is a lifting of $\T$ on $\DIF$.

As before, the monad $\T_\omega$ induces a category
of $\T_\omega$-algebras, denoted by $\DIF^{\T_\omega}$, and gives rise to an adjunction
\begin{equation*}
\langle F_\omega^{\T_\omega}, U_\omega^{\T_\omega},(\eta^\omega)^{\T_\omega} , (\varepsilon^\omega)^{\T_\omega} \rangle:\DIF
\rightharpoonup \DIF^{\T_\omega},
\mlabel{eq:adjt'}
\end{equation*}
where
$F_\omega^{\T_\omega} : \DIF \rightarrow \DIF^{\T_\omega}$ is given on objects $(A, d)$ in $\DIF$
by
$$F_\omega^{\T_\omega}(A, d) = \langle(\sha(A),\ldt),\mu^\omega_{(A, d)}\rangle$$ and on morphisms
$\varphi:(A, d)\rightarrow (A', d')$ in $\DIF$ by $F_\omega^{\T_\omega}(\varphi)=F_\omega(\varphi)$, and
$U_\omega^{\T_\omega}:\DIF^{\T_\omega}\rightarrow \DIF$
is defined on objects
$\langle (A, d),f\rangle$ in $\DIF^{\T_\omega}$ by $$U_\omega^{\T_\omega}\langle (A, d),f\rangle = (A, d)$$ and on morphisms
$\varphi:\langle (A, d),f\rangle\rightarrow\langle (A',d'),f'\rangle$ in $\DIF^{\T_\omega}$
by $U_\omega^{\T_\omega}(\varphi)=\varphi$.  The natural transformations $(\eta^\omega)^{\T_\omega}$ and $(\varepsilon^\omega)^{\T_\omega}$ are defined similarly as $\eta^\omega$ and $\varepsilon^\omega$, respectively.
Then there is a uniquely defined comparison functor
$K_\omega:\DRB_\omega \rightarrow \DIF^{\T_\omega}$ such that $K_\omega F_\omega=F_\omega^{\T_\omega}$
and $U_\omega^{\T_\omega} K_\omega=U_\omega$.

As a special case of~\cite[Proposition 2.5(b)]{ZGK}, we obtain
\begin{coro}
Suppose that $\omega\in\calt$ satisfy Assumption~\mref{as:ext}. Then the comparison functor $K_\omega:\DRB_\omega \rightarrow \DIF^{\T_\omega}$ is
an isomorphism, that is, $\DRB_\omega$ is monadic over $\DIF$.
\mlabel{co:drbmon}
\end{coro}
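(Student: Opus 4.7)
The plan is to invoke \cite[Proposition~2.5(b)]{ZGK}, the same general categorical criterion that yielded Corollary~\mref{co:rbmon}. In that setup, one views our adjunction $\langle F_\omega, U_\omega, \eta^\omega, \varepsilon^\omega \rangle: \DIF \rightharpoonup \DRB_\omega$ from Proposition~\mref{pp:leftadjCt} as a lifting of the base adjunction $\langle F, U, \eta, \varepsilon \rangle: \ALG \rightharpoonup \RBA$ of Corollary~\mref{co:adjrba} along the forgetful functors $V: \DIF \to \ALG$ and $\DRB_\omega \to \RBA$. Under Assumption~\mref{as:ext}, the underlying Rota-Baxter algebra of $F_\omega(R, d) = (\sha(R), \ldt, P_R)$ is exactly $F(V(R, d)) = (\sha(R), P_R)$, and $U_\omega$ projects onto $U$; this confirms the lifting situation required by the cited proposition.

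Next I would verify that the induced monad $\T_\omega = \langle U_\omega F_\omega, \eta^\omega, U_\omega \varepsilon^\omega F_\omega\rangle$ on $\DIF$ really is a lifting of $\T$ on $\ALG$. At the level of underlying algebras this reduces to the identity $V T_\omega = T V$, together with compatibility of the units and multiplications under $V$. All of these follow immediately from the constructions in Propositions~\mref{prop:extd} and~\mref{pp:leftadjCt}: the extension $\ldt$ leaves $\sha(R)$ and $P_R$ unchanged, so passing from $(\sha(R), \ldt, P_R)$ to its underlying $(\sha(R), P_R)$ recovers the $\T$-structure, and the formulas for $\eta^\omega$ and $\varepsilon^\omega$ on the algebra level agree with those for $\eta$ and $\varepsilon$.

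With the lifting condition confirmed and Corollary~\mref{co:rbmon} supplying the monadicity of $\RBA$ over $\ALG$, \cite[Proposition~2.5(b)]{ZGK} then gives at once that $K_\omega: \DRB_\omega \to \DIF^{\T_\omega}$ is an isomorphism. The main obstacle I anticipate is making the precise notion of ``lifting'' in that proposition match our setup; however, since Corollary~\mref{co:rbmon} was obtained in exactly the same way and Corollary~\mref{co:Rcomonadic} is its formal comonadic analogue just proved in the preceding subsection, this should amount only to routine book-keeping rather than a new argument.
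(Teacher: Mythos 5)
Your proposal matches the paper's treatment: the paper gives no separate argument here, simply stating that the corollary follows ``as a special case of [ZGK, Proposition~2.5(b)]'' once the adjunction $\langle F_\omega, U_\omega, \eta^\omega, \varepsilon^\omega\rangle:\DIF\rightharpoonup\DRB_\omega$ of Proposition~\mref{pp:leftadjCt} and the lifted monad $\T_\omega$ are in place. Your additional verification that $\T_\omega$ lifts $\T$ along the forgetful functors is exactly the book-keeping implicit in the paper's citation, so the two arguments coincide.
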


\subsection{The main result}
\mlabel{sec:mix}

We now give the equivalence among the existence of (co)extensions in Section~\mref{sec:ext}, of liftings of (co)monads in Section~\mref{ss:colift} and Section~\mref{ss:lift}, and mixed distributive laws involving type $\omega$ differential Rota-Baxter algebras.

We first recall from~\cite{HW} some background information on mixed distributive laws, a generalization of the notion of a distributive law introduced by J. Beck in his fundamental work~\cite{Be}.
\begin{defn}
Let a category $\A$, a monad ${\mathbf{T}} = \langle T, \eta, \mu \rangle$ and a comonad $\C = \langle C, \varepsilon, \delta\rangle$ on $\A$ be given.
A {\bf mixed distributive law of $\T$ over $\C$} is a natural transformation $\beta: TC \rightarrow CT$ satisfying the following conditions.

\begin{enumerate}
\item $\beta \circ \eta C = C\eta$;
\item $\varepsilon T \circ \beta = T\varepsilon$;
\item $\delta T \circ \beta = C\beta \circ \beta C \circ T\delta$;
\item $\beta \circ \mu C = C\mu \circ \beta T \circ T\beta$.
\end{enumerate}
\mlabel{def:mdl}
\end{defn}

\begin{theorem}
The following statements are equivalent.
\begin{enumerate}
\item
There exists a mixed distributive law of $\T$ over $\C$.
\item
There exists a comonad
$\tilde{\C} = \langle \tilde{C}, \tilde{\varepsilon},\tilde{\delta} \rangle$ on $\A^{\T}$ which lifts $\C$ $($i.e., $U^{\T} \tilde{C}=C U^{\T}$, $U^{\T}\tilde{\varepsilon}=\varepsilon U^{\T}$, $U^{\T}\tilde{\delta}=\delta U^{\T}$$)$.

\item There exists a monad $\tilde{\T}=\langle\tilde{T}, \tilde{\eta}, \tilde{\mu}\rangle$ on $\A_{\C}$ which lifts $\T$.
\end{enumerate}
\mlabel{theorem:3equ}
\end{theorem}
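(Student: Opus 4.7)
The plan is to establish the equivalence in the form of mutual bijections (rather than a cyclic chain), by giving explicit constructions back and forth between mixed distributive laws, comonad liftings to $\A^{\T}$, and monad liftings to $\A_\C$, and then verifying that the defining axioms translate into one another. This is the natural generalization of J. Beck's classical theorem on distributive laws between two monads, so I expect the proof to follow the same template, with care taken in the mixed setting to track which arrows go in which direction.

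For the equivalence of (1) and (2), given a mixed distributive law $\beta: TC \to CT$, I would define the lifted comonad on $\A^{\T}$ by
\[
\tilde{C}\langle A,h\rangle := \langle CA,\ Ch \circ \beta_A\rangle,\qquad \tilde{C}(\varphi) := C(\varphi),
\]
with counit $\tilde\varepsilon_{\langle A,h\rangle} := \varepsilon_A$ and comultiplication $\tilde\delta_{\langle A,h\rangle}:=\delta_A$. The $\T$-algebra axioms for $\tilde C\langle A,h\rangle$ reduce precisely to axiom (i) and axiom (iv) of Definition~\ref{def:mdl}, while the counit and coassociativity of $\tilde\C$ use axioms (ii) and (iii) respectively, together with the comonad axioms of $\C$. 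In the reverse direction, given a lifting $\tilde\C$, I would apply $\tilde C$ to the free $\T$-algebra $F^{\T}A = \langle TA,\mu_A\rangle$ to obtain a $\T$-algebra structure map $h^\beta_A : TCTA \to CTA$, and then set
\[
\beta_A := h^{\beta}_A \circ TC\eta_A : TCA \to CTA.
\]
Naturality of $\beta$ comes from functoriality of $\tilde C$, and the four axioms of Definition~\ref{def:mdl} follow from the $\T$-algebra axioms together with the lifting conditions $U^{\T}\tilde\varepsilon=\varepsilon U^{\T}$ and $U^{\T}\tilde\delta=\delta U^{\T}$. That the two constructions are inverse to one another is a direct calculation using the triangle identities of the adjunction $F^{\T}\dashv U^{\T}$.

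For the equivalence of (1) and (3), the argument is formally dual. Given $\beta$, I would define
\[
\tilde{T}\langle A,f\rangle := \langle TA,\ \beta_A \circ Tf\rangle, \qquad \tilde T(\varphi):=T(\varphi),
\]
with unit $\tilde\eta_{\langle A,f\rangle}:=\eta_A$ and multiplication $\tilde\mu_{\langle A,f\rangle}:=\mu_A$; the $\C$-coalgebra axioms for $\tilde T\langle A,f\rangle$ correspond to (ii) and (iii), and the monad axioms for $\tilde\T$ correspond to (i) and (iv). Conversely, given a lifting $\tilde\T$ of $\T$ to $\A_{\C}$, I would apply $\tilde T$ to the cofree coalgebra $G_{\C}A=\langle CA,\delta_A\rangle$ to obtain a $\C$-coalgebra structure map $f^{\beta}_A : TCA \to CTCA$, and then set $\beta_A := CT\varepsilon_A \circ f^{\beta}_A$, again verifying that this yields a mixed distributive law and that the two assignments are mutually inverse by the triangle identities for $V_{\C}\dashv G_{\C}$.

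The routine part is the book-keeping: writing down each axiom as a commutative diagram and then matching up edges. The main obstacle, and the place where I would spend most of my time, is making sure the correct \emph{direction} of $\beta$ is used everywhere, since in the mixed case $\T$ and $\C$ play asymmetric roles and it is easy to mis-apply axiom (iii) (which involves $\delta$ in an unsymmetric way compared to axiom (iv) with $\mu$). To keep the correspondences clean I would verify one direction in full and then obtain the other by Eilenberg--Moore duality, formally dualizing by replacing $\A$ with $\A^{\mathrm{op}}$, swapping the roles of monads and comonads, and reversing $\beta$. Once the bijections between (1) and (2) and between (1) and (3) are both established, the equivalence (2)$\Leftrightarrow$(3) follows by composition, completing the proof.
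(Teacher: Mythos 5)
Your proposal is correct, but it takes a genuinely different route from the paper: the paper's entire proof is a citation, deriving the statement as the $\mathcal{V}=\mathbf{Sets}$ special case of Wolff's enriched-category result (Theorem~2.2 and Remark~2.3 of [Wo2]) on the correspondence among lifted $\mathcal{V}$-comonads, lifted $\mathcal{V}$-monads and $\mathcal{V}$-mixed distributive laws, whereas you reconstruct the Beck-style bijections explicitly. Your constructions check out: $\tilde{C}\langle A,h\rangle=\langle CA,\ Ch\circ\beta_A\rangle$ does land in $\A^{\T}$ precisely by axioms (i) and (iv) of Definition~\ref{def:mdl} together with naturality of $\beta$ applied to $h$, the counit and comultiplication lift by (ii) and (iii) respectively, and the dual assignment $\tilde{T}\langle A,f\rangle=\langle TA,\ \beta_A\circ Tf\rangle$ distributes the four axioms in the mirrored way you describe; the inverse constructions $\beta_A=h^\beta_A\circ TC\eta_A$ and $\beta_A=CT\varepsilon_A\circ f^\beta_A$ are the standard ones, and recovering one equivalence from the other by passing to $\A^{\mathrm{op}}$ (which swaps $\T$ and $\C$ and identifies $(\A^{\mathrm{op}})^{\C^{\mathrm{op}}}$ with $(\A_{\C})^{\mathrm{op}}$) is legitimate. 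What your approach buys is a self-contained proof that makes visible exactly which axiom of the mixed distributive law is responsible for which lifted structure map --- information the paper implicitly relies on later when it identifies the lifted operators $\lpt$ and $\ldt$ in Theorem~\ref{thm:main} --- at the cost of a substantial amount of diagram bookkeeping that the citation avoids. The only caveat is that a complete write-up would still have to carry out the verification, only sketched in your proposal, that the two assignments in each pair are mutually inverse and that the four axioms hold for the $\beta$ recovered from a lifting; these are routine but not free.
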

\begin{proof}
Theorem~2.2 and Remark~2.3 from~\cite{Wo2} give the correspondence among lifting $\mathcal V$-comonads, lifting $\mathcal V$-monads and $\mathcal V$-mixed distributive laws, where $\mathcal V$ is a symmetric monoidal closed category. The theorem follows as a special case when $\mathcal V$ is taken to be the category of sets.
\end{proof}

Recall $\calt=\left\{
xy-(\phi(x) +y\psi(x)) \,|\, \phi, \psi\in\bfk[x]
\right\}$ as in Eq.~(\mref{eq:T}). Now we have arrived at the main result of this paper.

\begin{theorem}
Let $\omega\in\calt$ be given. The following statements are equivalent:
\begin{enumerate}
\item
For every Rota-Baxter operator $P$ on every algebra $R$, the unique coextension $\lpt$ of $P$ to $R^\NN$ given in Proposition~\mref{prop:extrb} is a Rota-Baxter operator.
\mlabel{it:extrb}
\item
For every differential operator $d$ on every algebra $R$, the unique extension $\ldt$ of $d$ to $\sha(R)$ given in Proposition~\mref{prop:extd} is a differential operator.
\mlabel{it:extd}
\item
The functor $G_\omega:\OA\to \ODA_\omega$ in Eq.~(\mref{eq:coefun}) restricts to a functor $G_\omega:\RBA\to \DRB_\omega$.
\mlabel{it:coefun}
\item
The functor $F_\omega:\OA_0\to \ORB_\omega$ in Eq.~(\mref{eq:extfun}) restricts to a functor $F_\omega:\DIF\to \DRB_\omega$.
\mlabel{it:extfun}
\item
There is a lifting $\tilde{\C}= \langle \tilde{C},\tilde{\varepsilon},\tilde{\delta} \rangle$ of $\mathbf{C}$ on $\RBA$ where $\tilde{C}(R,P):=(R^{\NN}, \widetilde{P})$, and an isomorphism $\widetilde{H}:\DRB_\omega\rightarrow\RBA_{\tilde{\C}}$ over $\RBA$ given by
$$\widetilde{H}(R, d, P)=\langle (R, P), \theta_{(R, d, P)}:(R, P)\rightarrow (R^{\NN}, \widetilde{P})\rangle.$$ Here $\theta_{(R, d, P)}(u)_{n}:=d^n(u)$ for all $u\in R, n\in\NN$.
\mlabel{it:liftcomo}
\item
There is a lifting $\tilde{\T}= \langle \tilde{T},\tilde{\eta},\tilde{\mu} \rangle$ of $\mathbf{T}$ on $\DIF$ where $\tilde{T}(R,d):=(\sha(R),\tilde{d})$, and an isomorphism $\widetilde{K}:\DRB_\omega\rightarrow\DIF^{\tilde{\T}}$ over $\DIF$ given by $$\widetilde{K}(R, d, P)=\langle (R, d), \vartheta_{(R, d, P)}:(\sha(R),\tilde{d})\rightarrow (R, d)\rangle .$$
Here $\vartheta_{(R, d, P)}(v_0\ot v_1\ot\cdots \ot v_m):=v_0P(v_1P(\cdots P(v_m)\cdots))$ for all $v_0\ot v_1\ot\cdots \ot v_m\in \sha(R)$.
\mlabel{it:liftmo}
\vspace{-0.15in}
\item
There is a mixed distributive law $\beta:TC\rightarrow CT$ such that $(\ALG_{\C})^{\tilde{\T}_\beta}$ is isomorphic to the category $\DRB_\omega$, where $\tilde{\T}_\beta$ is a lifting monad of $\T$ given by the mixed distributive law $\beta$.
\mlabel{it:mdl}
\end{enumerate}
\mlabel{thm:main}
\end{theorem}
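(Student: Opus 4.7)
\smallskip

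\noindent\textbf{Proof proposal.} The plan is to organize the seven statements into two ``symmetric halves'' and then glue them via Theorem~\mref{theorem:3equ}. First I would handle the tautological identifications within each half: (\mref{it:extrb}) $\Leftrightarrow$ (\mref{it:coefun}) follows immediately because Proposition~\mref{prop:extrb} already places $(R^\NN,\partial_R,\lpt)$ in $\ODA_\omega$, so the additional requirement that it lie in $\DRB_\omega$ is exactly that $\lpt$ be Rota-Baxter, i.e., Assumption~\mref{as:coext}; functoriality of $G_\omega$ on $\RBA$ is then automatic. Dually, Proposition~\mref{prop:extd} gives (\mref{it:extd}) $\Leftrightarrow$ (\mref{it:extfun}) with Assumption~\mref{as:ext}.

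Next I would move from the functorial restrictions to the liftings. Assuming (\mref{it:coefun}), Proposition~\mref{pp:rightadjCt} produces the adjunction $\langle V_\omega,G_\omega,\eta^\omega,\varepsilon^\omega\rangle:\DRB_\omega\rightharpoonup\RBA$, whose induced comonad $\C_\omega=\langle C_\omega,\varepsilon^\omega,\delta^\omega\rangle$ on $\RBA$ lifts $\C$ along $U:\RBA\to\ALG$ because the underlying algebra of $(R^\NN,\lpt)$ is just $R^\NN$ with its cofree differential structure, and $\varepsilon^\omega,\delta^\omega$ agree with $\varepsilon,\delta$ on underlying algebras by construction. Corollary~\mref{co:Rcomonadic} then provides the cocomparison isomorphism $H_\omega:\DRB_\omega\to\RBA_{\C_\omega}$, and unwinding Eq.~(\mref{eq:eta1}) shows that $H_\omega(R,d,P)=\langle(R,P),\theta_{(R,d,P)}\rangle$ with $\theta_{(R,d,P)}(u)_n=d^n(u)$, giving (\mref{it:liftcomo}). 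The conversely (\mref{it:liftcomo}) $\Rightarrow$ (\mref{it:extrb}) is obtained by taking any $(R,P)\in\RBA$, applying $\tilde{C}$ to get a Rota-Baxter algebra $(R^\NN,\tilde{P})$; since $\tilde{\C}$ lifts $\C$, the underlying algebra is $R^\NN$ and the compatibility with $\tilde{\delta}=\delta$ forces $\tilde{P}$ to be a coextension of $P$ that assembles with $\partial_R$ into an object of $\DRB_\omega$, hence equal to $\lpt$ by the uniqueness in Proposition~\mref{prop:extrb}. The analogous argument using Proposition~\mref{pp:leftadjCt}, Corollary~\mref{co:drbmon}, and Eq.~(\mref{eq:vare0}) establishes (\mref{it:extd}) $\Leftrightarrow$ (\mref{it:extfun}) $\Leftrightarrow$ (\mref{it:liftmo}).

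To link the two halves and reach (\mref{it:mdl}), I would appeal to Theorem~\mref{theorem:3equ} applied on the category $\ALG$ with the monad $\T=\T_\RBA$ and the comonad $\C=\C_\DIF$. By Corollaries~\mref{co:rbmon} and \mref{co:comonadic}, the isomorphisms $K:\RBA\xrightarrow{\sim}\ALG^\T$ and $H:\DIF\xrightarrow{\sim}\ALG_\C$ let me transport liftings of $\C$ on $\RBA$ (item \mref{it:liftcomo}) and liftings of $\T$ on $\DIF$ (item \mref{it:liftmo}) respectively to liftings of $\C$ on $\ALG^\T$ and of $\T$ on $\ALG_\C$, which by Theorem~\mref{theorem:3equ} correspond bijectively to mixed distributive laws $\beta:TC\to CT$. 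The identifications $\DRB_\omega\cong\RBA_{\C_\omega}\cong(\ALG^\T)_{\tilde{\C}_\beta}\cong(\ALG_\C)^{\tilde{\T}_\beta}\cong\DIF^{\T_\omega}$ all commute with the relevant forgetful functors, yielding (\mref{it:mdl}) and completing the cycle.

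The main obstacle I anticipate is not in any single implication but in the bookkeeping that verifies the explicit descriptions of $\widetilde H$ and $\widetilde K$ in (\mref{it:liftcomo}) and (\mref{it:liftmo}) match the formulas produced by the abstract comparison/cocomparison machinery, and that the lifting conditions $U^\T\tilde C=CU^\T$ etc.\ really hold for the comonads/monads constructed from $G_\omega$ and $F_\omega$. Once one checks, via Lemmas~\mref{lem:lpeta} and~\mref{lem:edcomm}, that $\eta^\omega_{(R,d,P)}=\eta_{(R,d)}$ and $\varepsilon^\omega_{(R,d,P)}=\varepsilon_{(R,P)}$ as underlying maps, these compatibilities become routine, and the uniqueness clauses in Propositions~\mref{prop:extrb} and~\mref{prop:extd} take care of the rest.
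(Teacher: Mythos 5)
Your proposal is correct and follows essentially the same route as the paper: the cycle (i)$\Rightarrow$(iii)$\Rightarrow$(v)$\Rightarrow$(i) via Proposition~\mref{pp:rightadjCt}, Corollary~\mref{co:Rcomonadic} and the uniqueness clause of Proposition~\mref{prop:extrb}, the dual cycle through (ii), (iv), (vi), and the bridge to (vii) via Theorem~\mref{theorem:3equ}. The only small imprecision is that in (\mref{it:liftcomo})$\Rightarrow$(\mref{it:extrb}) the coextension property $\widetilde{P}_0(f)=P(f_0)$ comes from the counit $\tilde{\varepsilon}$ being a Rota-Baxter morphism rather than from $\tilde{\delta}$, but this does not affect the argument.
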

We will use an algebraic description in a subsequent paper to give a classification of those constraint $\omega\in \Omega$ that satisfy these equivalent conditions in Theorem~\mref{thm:main}. More precisely, for a given $\omega\in \Omega$, we will tell whether the corresponding coextension of a Rota-Baxter operator $P$ on an algebra $R$ to $(R^\NN,\partial_R)$ is a Rota-Baxter operator again.

\begin{proof}
{(\mref{it:extrb}) $\Longrightarrow$ (\mref{it:coefun})}. Item~(\mref{it:extrb}) combined with Proposition~\mref{prop:extrb} gives $(R^\NN, \partial_R, \lpt)\in\DRB_\omega$.
\smallskip

\noindent
(\mref{it:extd}) $\Longrightarrow$ (\mref{it:extfun}).
Similarly, Item~(\mref{it:extd}) combined with Proposition~\mref{prop:extd} gives $(\sha(R), \ldt, P_R)\in\DRB_\omega$.
\smallskip

\noindent
(\mref{it:coefun}) $\Longrightarrow$ (\mref{it:liftcomo}).
In Proposition~\mref{pp:rightadjCt}, we obtain an adjunction $\langle V_\omega, G_\omega, \eta^\omega, \varepsilon^\omega \rangle:\DRB_\omega\rightharpoonup\RBA$
which gives a lifting comonad $\mathbf{C}_\omega$ of $\mathbf{C}$ on $\RBA$. Also there is an isomorphism $H_\omega:\DRB_\omega \rightarrow \RBA_{\C_\omega}$ in Corollary~\mref{co:Rcomonadic}, as required.
\smallskip

\noindent
(\mref{it:extfun}) $\Longrightarrow$ (\mref{it:liftmo}).
In Proposition~\mref{pp:leftadjCt}, we obtain an adjunction $\langle F_\omega, U_\omega, \eta^\omega, \varepsilon^\omega \rangle:\DIF\rightharpoonup \DRB_\omega$
which gives a lifting monad $\T_\omega$ of $\T$ on $\DIF$. Further, there is an isomorphism $K_\omega:\DRB_\omega \rightarrow \DIF^{\T_\omega}$ in Corollary~\mref{co:drbmon}, as required.
\smallskip

\noindent
(\mref{it:liftcomo}) $\Longrightarrow$ (\mref{it:extrb}).
The lifting comonad $\tilde{\C} = \langle \tilde{C},\tilde{\varepsilon},\tilde{\delta} \rangle$ on $\RBA$ induces an adjunction
$$\langle V_{\tilde{\C}}, G_{\tilde{\C}}, \eta_{\tilde{\C}}, \varepsilon_{\tilde{\C}} \rangle: \RBA_{\tilde{\C}}
\rightharpoonup \RBA.$$
Since $\tilde{\varepsilon}_{(R,P)}:(R^{\NN}, \widetilde{P})\rightarrow (R,P)$ is a morphism in $\RBA$, $\tilde{\varepsilon}_{(R,P)}(\widetilde{P}(f))=
P(\tilde{\varepsilon}_{(R,P)}(f))$ for all $f\in R^\NN$. That is, $\widetilde{P}_0(f)=P(f_0)$. Then $\widetilde{P}$ is a coextension of $P$.
Recall that
$$G_{\tilde{\C}}(R,P) =\langle (R^{\NN}, \widetilde{P}),\tilde{\delta}_{(R,P)}\rangle\in \RBA_{\tilde{\C}},\quad\widetilde{H}^{-1}\langle (R^{\NN}, \widetilde{P}),\tilde{\delta}_{(R,P)}\rangle=(R^{\NN}, \partial_R, \widetilde{P})\in \DRB_\omega.$$
Then $\partial_R\widetilde{P}=\phi(\partial_R)+\widetilde{P}\psi(\partial_R)$. By the uniqueness of the coextension in Proposition~\mref{prop:extrb}, we have $\widetilde{P}=\lpt$. Therefore, $\lpt$ is a Rota-Baxter operator.
\smallskip

\noindent
(\mref{it:liftmo}) $\Longrightarrow$ (\mref{it:extd}).
The lifting monad $\tilde{\T} = \langle \tilde{T},\tilde{\eta},\tilde{\mu} \rangle$ on $\DIF$ induces an adjunction
$$\langle F^{\tilde{\T}}, U^{\tilde{\T}},\eta^{\tilde{\T}} , \varepsilon^{\tilde{\T}} \rangle:\DIF
\rightharpoonup \DIF^{\tilde{\T}}.$$
Since $\tilde{\eta}_{(R,d)}:(R,d)\rightarrow (\sha(R), \tilde{d})$ is a morphism in $\DIF$, we have $\tilde{d}\tilde{\eta}_{(R,d)}=\tilde{\eta}_{(R,d)} d$. That is, $\tilde{d}(r)=d(r)$ for all $r\in R$. Then $\tilde{d}$ is an extension of $d$.
Recall that
$$F^{\tilde{\T}}(R,d) =\langle (\sha(R),\tilde{d}),\tilde{\mu}_{(R,d)}\rangle\in \DIF^{\tilde{\T}},\quad\widetilde{K}^{-1}\langle (\sha(R),\tilde{d}),\tilde{\mu}_{(R,d)}\rangle=(\sha(R), \tilde{d}, P_R)\in \DRB_\omega.$$ Let $u=u_0\ot u'\in R^{\ot (n+1)}$ with $u'\in R^{\ot n}$. Since $\tilde{d}$ is a differential operator of weight $\lambda$ on $\sha(R)$, we have
$$\tilde{d}(u)=\tilde{d}(u_0P_R(u'))=\tilde{d}(u_0)P_R(u')+(u_0+\lambda \tilde{d}(u_0))(\tilde{d}P_R)(u')\quad \text{(by Eq.~(\ref{eq:der1})) }.$$
Also by $\tilde{d} P_R=\phi(\tilde{d})+P_R\psi(\tilde{d})$, we have $\tilde{d}(u)=d(u_0)\ot u'+(u_0+\lambda d(u_0))(\phi(\tilde{d})+P_R\psi(\tilde{d}))(u').$ Further, we obtain $\tilde{d}(\oplus_{i=1}^{n}R^{\ot i})\subseteq\oplus_{i=1}^{n}R^{\ot i}$ which is proved by induction on $n\in\NN_+$. That is, $\tilde{d}$ satisfies Eqs.~(\mref{eq:1leibdform}) and (\mref{eq:qcon}).
Then by the uniqueness of the extension in Proposition~\mref{prop:extd}, we have $\tilde{d}=\ldt$. Therefore, $\ldt$ is a differential operator.
\smallskip

\noindent
(\mref{it:liftcomo})$\Longleftrightarrow$(\mref{it:liftmo})
$\Longleftrightarrow$(\mref{it:mdl}).
This follows directly from Theorem~\mref{theorem:3equ}, \cite[Theorem~2.4]{Wo2} and \cite[Theorem~5.3]{ZGK}.
\end{proof}

\smallskip

\noindent
{\bf Acknowledgements}:
This work is supported by the National Natural Science Foundation of China (Grant No. 11371178) and the China Scholarship Council (Grant No. 201606180084). Shilong Zhang thanks Rutgers University -- Newark for its hospitality during his visit August 2016 - August 2017.

\end{document}